\providecommand{\noopsort}[1]{}
\numberwithin{equation}{subsection}
\theoremstyle{definition} 
 \newtheorem{definition}{Definition}[section]
 \newtheorem{remark}[definition]{Remark}
\newtheorem*{notation}{Notations}
\theoremstyle{plain}
 \newtheorem{theorem}[definition]{Theorem}
 \newtheorem{lemma}[definition]{Lemma}
\newcommand*{\house}[1]{
  \mathord{
    \mathpalette\@house{#1}
  }
}
\newcommand*{\@house}[2]{
  \dimen@=\fontdimen8 %
      \ifx#1\scriptscriptstyle\scriptscriptfont
      \else\ifx#1\scriptstyle\scriptfont
      \else\textfont\fi\fi
      3 %
  \sbox0{%
    $#1%
      \vrule width\dimen@\relax
      \overline{%
        \kern2\dimen@
        \begingroup 
          #2%
        \endgroup
        \kern2\dimen@
      }
      \vrule width\dimen@\relax
      \mathsurround=1.5\dimen@ 
    $
  }
  \ht0=\dimexpr\ht0-\dimen@\relax
  \dp0=\dimexpr\dp0+2\dimen@\relax
  \vbox{
    \kern\dimen@ 
    \copy0 
  }
}
\def\11{{\mathbf 1}}
\theoremstyle{remark}
\newtheorem{exampl}[subsubsection]{Example}
\def\bee{\begin{exampl}}
\def\eee{\end{exampl}}
\def\bn{\begin{notation}}
\def\en{\end{notation}}
\def\br{\begin{remark}}
\def\er{\end{remark}}
\def\bp{\begin{prop}}
\def\ep{\end{prop}}
\def\bpr{\begin{proof}}
\def\epr{\end{proof}}
\def\bt{\begin{thm}}
\def\et{\end{thm}}
\def\be{\begin{equation}}
\def\ee{\end{equation}}
\def\bl{\begin{lem}}
\def\el{\end{lem}}
\def\bc{\begin{cor}}
\def\ec{\end{cor}}
\def\bd{\begin{defn}}
\def\ed{\end{defn}}
\numberwithin{equation}{subsection}
\author{Jaroslav Han$\check{{\rm C}}$l$^{\dag}$}
\thanks{}
\address{$\mbox{}^{\dag}$
Department of Mathematics,  
Faculty of Sciences, University of Ostrava, 
30. dubna  22, 701  
03 Ostrava 1,  Czech Republic}
\email{hancl@osc.cz}
\urladdr{}
\author{Radhakrishnan Nair$^{\ddag}$}
\thanks{}
\address{$^{\ddag}$
Mathematical Sciences, The University of Liverpool, Peach Street, 1 Peach Street, Liverpool, L69 7ZL, U.K.}
\email{nair@liv.ac.uk}
\urladdr{}
\author{Jean-Louis Verger-Gaugry$\mbox{}^{\Diamond}$ }
\thanks{}
\address{$\mbox{}^{\Diamond}$ 
LAMA, CNRS UMR 5127,
Univ. Grenoble Alpes, Univ. Savoie Mont~Blanc,
\newline
F - \!73000 Chamb\'ery, France}
\email{Jean-Louis.Verger-Gaugry@univ-smb.fr}
\urladdr{}
\title[On Polynomials in Primes, Ergodic Averages and Monothetic Groups]
{On Polynomials in Primes, Ergodic Averages and Monothetic Groups}
\begin{document}

\title{On Polynomials in Primes, Ergodic Averages and Monothetic Groups}

\maketitle

\begin{abstract}
Let $G$ denote a compact monothetic group, 
and  let 
$$\rho (x) = \alpha_k x^k + \ldots  + 
\alpha_1 x + \alpha_0,$$
where $\alpha_0, \ldots , \alpha_k$ are 
elements of  $G$ one of which is a generator of 
$G$.   Let  $(p_n)_{n\geq 1}$ 
denote the sequence 
of rational prime numbers.  
Suppose $f \in L^{p}(G)$ for $p> 1$.  
It is known that if
$$A_{N}f(x) := {1 \over N} \sum _{n=1}^{N}
f(x + \rho (p_n)) \qquad 
(N=1,2, \ldots ),$$  
then the limit 
$\lim _{n\to \infty} A_Nf(x)$ exists for almost 
all $x$ with 
respect Haar measure.  
We show that if $G$ is connected then the limit 
is $\int_{G} f d\lambda$.  
In the case where $G$ is 
the $a$-adic integers, which is a totally 
disconnected group, the limit is described in 
terms of Fourier multipliers which are 
generalizations of Gauss sums.
\end{abstract}

\vspace{0.7cm}

Keywords:
Monothetic groups, $a$-adic numbers, polynomial in primes, ergodic averages, Fourier multiplier.
\vspace{0.5cm}

2010 Mathematics Subject Classification:
Primary 28D05, 11K41, Secondary 11K55,
 11L20, 12D99, 
28D99, 47A35.

\tableofcontents  

\newpage
\section{Introduction}
\label{S1}

Topological groups, having a dense cyclic subgroup have been well studied.  For the class of locally compact groups, following D. Van Dantzig, who was the first to study them 
\cite{vandantzig}, these groups are called monothetic.  Such groups are fully 
classified in \cite{hewittross}.  
See also G. Falcone, P. Plaumann and K. Strambach 
\cite{falconeetal} and D. Dikranjan and A.G. Bruno \cite{dikranjanbruno}.  
Some examples are also given by J.W. Nienhuys \cite{nienhuys}.

Following \cite{hewittross} 
we say a topological group $G$ is monothetic if it contains an element $\alpha$ such that the closure of $(n \alpha )_{n=1}^{\infty}$ is $G$.  Evidently by approximation, for arbitrary monothetic $G$,  the density and commutativity of $(n \alpha )_{n=1}^{\infty}$ implies the commutativity of $G$.  For a finite set of elements $\alpha _0, \alpha _1, \ldots , \alpha _k \in G$ set
$$
\rho (n) = \alpha _k n^k + \alpha _{k-1}n^{k-1} + \ldots \ + \alpha _1 n + \alpha _0,
$$
for each $n\in {\Bbb{N}}$.  
 Let $(p_n)_{n\geq 1}$ the sequence of rational primes.  Let $\lambda$ denote Haar measure on $G$ and let $L^p (G )$ denote the $L^p$ space of   $\lambda$-integrable functions on $G$.

\medskip
We  show the following.
\medskip
\begin{theorem}
\label{thm1}
Suppose $G$ is a compact, connected, monothetic group.   Then if one of the elements $\alpha _1, \ldots ,
\alpha _k  \in G$ is a generator and $f \in L^p (G)$ for $p> 1$, we have
$$
\lim _{N\to \infty} {1\over N} \sum _{n=1}^Nf(x + \rho (p_n) ) = \int _G fd\lambda,
$$
for almost all $x$ with respect to Haar measure on $G$.
\end{theorem}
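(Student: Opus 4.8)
The plan is to transfer the problem to the Fourier side of the compact abelian group $G$ and reduce it to an equidistribution statement for a polynomial evaluated at the primes.

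\textbf{Step 1: reduction to a single character.} First I would recall that the characters $\gamma\in\widehat G$ form an orthonormal basis of $L^2(G)$ and that their finite linear combinations are dense in $L^p(G)$ for every $p<\infty$ (they are dense in $C(G)$ by Stone--Weierstrass, and $C(G)$ is dense in $L^p(G)$). The cited $\lambda$-a.e.\ convergence result for the averages $A_N$ on $L^p(G)$, $p>1$, comes with the companion maximal inequality $\bigl\|\sup_{N\ge 1}|A_N f|\bigr\|_p\le C_p\|f\|_p$. Granting this, the set of $f\in L^p(G)$ with $\lim_N A_N f=\int_G f\,d\lambda$ a.e.\ is norm-closed in $L^p(G)$: if $\|f-f_j\|_p\to 0$ and each $f_j$ lies in this set, then $\limsup_N\bigl|A_Nf-\int_G f\,d\lambda\bigr|\le \sup_N|A_N(f-f_j)|+\bigl|\int_G(f-f_j)\,d\lambda\bigr|$ a.e., whose $L^p$ norm is $\le(C_p+1)\|f-f_j\|_p\to 0$. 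Hence it suffices to treat $f=\gamma$ for each $\gamma\in\widehat G$. For the trivial character $A_N\gamma\equiv 1=\int_G\gamma\,d\lambda$, so assume $\gamma\ne 1$, whence $\int_G\gamma\,d\lambda=0$ and the goal is $A_N\gamma\to 0$.

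\textbf{Step 2: the associated polynomial has an irrational coefficient.} Since $\gamma$ is a character and $\gamma(m\alpha)=\gamma(\alpha)^m$ for $m\in\mathbb Z$,
\[
A_N\gamma(x)=\gamma(x)\cdot\frac1N\sum_{n=1}^N\gamma(\rho(p_n)),\qquad \gamma(\rho(n))=\gamma(\alpha_0)\prod_{j=1}^k\gamma(\alpha_j)^{\,n^j}.
\]
Writing $\gamma(\alpha_j)=e^{2\pi i\theta_j}$ with $\theta_j\in[0,1)$ gives $\gamma(\rho(n))=e^{2\pi i Q(n)}$, where $Q(t)=\theta_k t^k+\cdots+\theta_1 t+\theta_0$. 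The crucial observation is that one of $\theta_1,\dots,\theta_k$ is irrational. Indeed, by hypothesis some $\alpha_{j_0}$ with $1\le j_0\le k$ generates $G$; were $\theta_{j_0}=a/q$ rational ($a\in\mathbb Z$, $q\ge 1$), then $\gamma(\alpha_{j_0})^q=1$, i.e.\ the character $\gamma^q$ is identically $1$ on $(n\alpha_{j_0})_{n\ge 1}$, hence on its closure $G$, so $\gamma^q=1$ in $\widehat G$; since $G$ is connected, $\widehat G$ is torsion-free, forcing $\gamma=1$, a contradiction. Therefore $\theta_{j_0}\notin\mathbb Q$.

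\textbf{Step 3: equidistribution over the primes.} Because $Q$ has an irrational coefficient among $\theta_1,\dots,\theta_k$, the sequence $(Q(p_n))_{n\ge 1}$ is uniformly distributed modulo one, and in particular $\frac1N\sum_{n=1}^N\gamma(\rho(p_n))=\gamma(\alpha_0)\cdot\frac1N\sum_{n=1}^N e^{2\pi i Q(p_n)}\to 0$. This is the prime analogue of Weyl's theorem: the linear case is Vinogradov's classical bound for $\sum_{p\le X}e^{2\pi i\theta p}$, and the general case (a polynomial with an irrational non-constant coefficient) follows from the Vinogradov--Vaughan method for exponential sums over primes, the same circle of estimates underlying the cited a.e.\ convergence of the $A_N$. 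By Step 2 the hypothesis applies, so $A_N\gamma\to 0=\int_G\gamma\,d\lambda$ uniformly on $G$; combined with Step 1 this proves the theorem.

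\textbf{Expected main obstacle.} The conceptual heart --- that $\alpha_{j_0}$ being a generator together with connectedness of $G$ forces $\theta_{j_0}$ to be irrational --- is short, and the group structure does the rest. The genuine analytic weight is in Step 3: the equidistribution of $(Q(p))_p$, together with the maximal inequality and a.e.\ convergence for $A_N$ invoked in Step 1. Connectedness is precisely what prevents $\gamma(\alpha_{j_0})$ from being a nontrivial root of unity; in the totally disconnected case it can be, and the averages then converge to the Gauss-sum-type Fourier multipliers mentioned in the introduction rather than to $\int_G f\,d\lambda$.
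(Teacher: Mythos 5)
Your proposal is correct and follows essentially the same route as the paper: reduce to characters (the paper does this via continuous functions and Weyl's criterion), use connectedness of $G$ to force the coefficient $\theta_{j_0}$ attached to the generator to be irrational (the paper cites Kuipers--Niederreiter where you give the torsion-free-dual argument directly), invoke the Vinogradov--Rhin equidistribution of $\rho_*(p)$ modulo one, and pass to general $f\in L^p$, $p>1$, by the maximal inequality obtained from Sawyer's theorem applied to the cited a.e.\ convergence result. No gaps; your Step 1 closure argument is just a cleaner phrasing of the paper's $\epsilon^{2k}$ approximation scheme.
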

\medskip 
If we drop the requirement that $G$ is connected the situation is different.  We now consider a class of totally disconnected monothetic groups of arithmetic character, of which the $p$-adic numbers is a special case.

 Let $a=(a_i)_{i\in \Bbb{Z}}$ denote a doubly infinite sequence of integers each greater than $1$.  Set $$\Bbb{Q}_a := \Pi _{i\in \Bbb{Z}} \{ 0, 1, \ldots , a_i-1\},$$ i.e. the space of doubly infinite sequences $a= (a_i)_{i\in \Bbb{Z}}$ with $a_i= 0$ for $i< n_o=n_o(a)$ for some $n_o$.

\par
Without loss of generality, assume for $x,y \in {\Bbb{Q}}_a$  that $n_o(x)=n_o(y) =0$ and that ${x} \ = \ (x_n)_{n=0}^{\infty}$ and 
${y }\ = \ (y_n)_{n=0}^{\infty}$ let
${z }\ = \ (z_n)_{n=0}^{\infty}$ be defined as follows.
Write $x_0 \ + \ y_0 \ = \ t_0 a_0 \ + \ z_0$, where
$z_0 \ \in \ \lbrace 0,1,...,a_0-1\rbrace$ and $t_0$ is a
rational integer.  Suppose $z_0,\cdots ,z_k$ and
$t_0,\cdots ,t_k$ have been defined.  Then write $x_{k+1} 
\  + \ y_{k+1} \ + \ t_k \ = \ t_{k+1} a_{k+1} \ + \ z_{k+1}$, where
$z_{k+1} \ \in \ \lbrace 0,1,...,a_{k+1}-1\rbrace$ and
$t_{k+1}$ is a rational integer.
We have thus inductively defined the sequence 
${ z }\ = \  (z_n)_{n=0}^{\infty}$, which we deem to be
${x} \ + \ {y}$.  The binary operation $+$
which we call addition makes $\Bbb{Q}_{a}$ an
abelian group.
\par 
For each non-negative integer $k$ let 
$$
\Lambda _k \ = \ \lbrace x \in  \Bbb{Q} _{a}
 : x_n =0 \ if \ n < k \rbrace.
$$
These sets form a basis at ${ 0} \ = \ (0,0,\cdots )$
for a topology on  $\Bbb{Q} _{a}$.  With respect to this
topology $\Bbb{Q} _{a}$ is  compact and the group
operations are continuous making $\Bbb{Q} _{a}$ a
locally-compact $\sigma$-compact abelian topological group.  A second binary
operation called multiplication, denoted by $\times$ and
compatible with addition is defined as follows.  Let
${ u} \ = \ (1,0,0,\cdots )$. Note that $(n{ u}
)_{n=0}^{\infty}$ is dense in $\Bbb{Q} _{ a}$.  First
on $(n{ u})_{n=0}^{\infty}$ define 
$k_1 {u} \times k_2 {u}$ to be 
$k_1 k_2 { u}$. Deeming multiplication to be
continuous on $\Bbb{Q} _{a}$ defines it off
$(n{u})_{n=0}^{\infty}$.   The binary operations
addition and multiplication makes $\Bbb{Q} _{a}$ a
topological ring .  We call $\Bbb{Z}_a  := \Lambda _0$  the $a$-adic integers and it  is compact
sub-ring of  $\Bbb{Q} _{a}$. Let $\eta_{a_i}$ denoted the counting measure on $\{ 0,1, \ldots , a_i-1 \}$.
For each integer let $\mu  =  \otimes _{i\in \Bbb{Z}} \eta _{a_i}$ denote the corresponding product measure on 
$\Pi_{n\in \Bbb{Z}}\{ 0, \ldots , a_n -1 \}$, defined first on "cylinder sets".  By cylinder sets we mean sets of the form
$$
A=\{ (b_i)_{i\in \Bbb{Z}}: b_{i_1} = c_{i_1} , \ldots , b_{i_r} = c_{i_r} \}
$$
for finite $r$, fixed $\{ i_1, \ldots , i_r \} \subset \Bbb{Z}$ and specific $c_{i_1}, \ldots c_{i_r}$.  We then specify $\mu$  by setting 
$$
\mu (A) = (a_{i_1} \ldots a_{i_r})^{-1}.
$$
on cylinder.  We then extend and complete to all Haar measurable sets on the topological group $\Bbb{Q}_{\bf a}$.

\par
The dual group of  $\Bbb{Z} _{a}$, which we denote
$\hat {\Bbb{Z} }_a$
consists of all rationals $t\ = \ {\ell \over A_r}$ where
$A_r \ = \ a_0\cdots a_r$ and $0\ \leq \ell \ \leq A_r$ for
some non-negative integer $r$.  To evaluate a character
$\chi _t$ at ${x}$ in  $\Bbb{Z} _{a}$  we write
$$
\chi_t ({x } ) \ = \ 
e\left ({\ell \over A_r}(x_0 + a_0x_1 +\cdots  + a_0\cdots
a_{r-1}x_r) \right ),
$$
where as usual, for a real number $x$, $e(x)$ denotes
$e^{2\pi i x}$.  We note that $\hat {\Bbb{Q}}_a = {\Bbb{Q}}_{a^*}$ where $a^*= ( a_{-n})_{n\in \Bbb{Z}}$.
\par 
Suppose at least one of the numbers 
$\alpha_1, \ldots , \alpha_k$ is a generator of 
$\Bbb{Z}_a$. 
Let $\delta_j: \Bbb{Z} \to \Bbb{Z}_a$ 
be the homomorphism defined by 
$\delta_{j} (n) = n \alpha_j$ $(j=1,2, \ldots , k)$ and let 
$\epsilon_j : \hat {\Bbb{Z}_a} \to \Bbb{Q}_a$ 
denote its adjoint.  
Hence for all $n\in \Bbb{Z}$ we have
$$e^{\epsilon_{j} 
\left ({l \over a_0 ... a_r} \right ) }
=
\chi_{l \over a_0  ...a_r }
\left (\delta_{j}(n )\right ) 
=
 \chi_{l \over a_0  ...a_r}(n \alpha_j).
$$
We can identify $\hat{\Bbb{Z}}_a$ with the 
quotient of $\Bbb{Q}_{a^*}$ by 
$A(\Bbb{Q}_{a^*} , \hat{\Bbb{Z}}_a )$ 
the annihilator of ${\hat{\Bbb{Z}}_a}$ in 
$\Bbb{Q}_{a^*}$ 
(\cite{hewittross}, 
Lemma 24.5).  
Let $\Psi :\Bbb{Q}_{a^{*}} \to \hat{ \Bbb{Z}}_a$ 
be the associated quotient map.  
Consider the map $G$, dependent on 
$\alpha_1, \ldots \alpha _k$, mapping
$\hat{ \Bbb{Z}}_a$ to $\Bbb{C}$, given by
$$
G\left (\chi _{l\over a_0 ... a_r} \right )= 
{1\over \phi ( D_r)}
\sum _{m=1\atop (m, D_r)=1}^{D_r}
e\left ( {\gamma (m) \over D_r} \right ),
$$
for all  ${l\over a_0 ... a_r} \in 
{\hat {\Bbb{Z}_a}}$.  
Here, $\phi$ is the Euler totient function. 
The integers $D_r$ and polynomial $\gamma$ are 
described as follows.  Let
$$l_j= l( \alpha_{j}(0)+\alpha_{j}(1) a_o
+ \alpha_{j} (2) a_{o} a_{1} 
+\ldots +\alpha_{j}(r-1) a_{0} \ldots a_{r-2}). \qquad 
(j=1,2, \ldots , k).
$$
Here $\alpha_{j} (r)$ denote the $r^{th}$ 
term of $\alpha_{j}$, viewed as a sequence. 
In general ${l_j \over a_0 \ldots a_r}$ 
is not in reduced form.   
Let ${m_j \over B_j}$ with $(m_j, B_j)=1$ 
denote ${l_j \over a_0a_1 \ldots a_r}$ in 
reduced terms.  
Let $D_r$ denote the least common multiple of 
the numbers $B_1, \ldots , B_k$ and 
define $\gamma$ via the identity
$${\gamma (x) \over D_r}
=
 {m_k \over B_k}x^k +  \ldots + {m_1 \over B_1}x.
$$
Define $m:\Bbb{Q}_{a^*} \to \Bbb{C}$ 
by $m(\chi ) =G (\Psi (\chi ))$ for all 
$\chi \in \Bbb{Q}_{a^*}$.  
Henceforth $\Bbb{F}(f)$ denotes the Fourier 
transform of $f$.

\bigskip
\begin{theorem}
\label{thm2}
Suppose that $p \in (1, 2]$ and that $f \in L^p \cap L^2 ( \Bbb{Q}_a )$.   Also suppose
$$
\rho (n) = \alpha _k n^k + \alpha _{k-1}n^{k-1} + \ldots \ + \alpha _1 n + \alpha _0,
$$
has degree at least $2$, with one of the numbers $\{ \alpha _1 , \ldots , \alpha _k \} \subset \Bbb{Q}_a$  a generator of $\Bbb{Q}_a$.   Then
$$
\lim _{N\to \infty} {1\over N} \sum _{n=1}^Nf(x + \rho (p_n) ) = \ell (f)(x),
$$
for $\ell (f) \in L^p$ almost everywhere with respect to Haar measure on $\Bbb{Q} _a$, where $$\Bbb{F}(\ell (f)) (\chi ) = m(\chi) \Bbb{F}(f)(\chi )$$
for almost all $x$ with respect to Haar measure on $\Bbb{Q}_a$.
\end{theorem}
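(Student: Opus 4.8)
We outline the argument. The pointwise existence of $\ell(f)(x)=\lim_{N\to\infty}A_Nf(x)$, where $A_Nf(x)=\tfrac1N\sum_{n=1}^Nf(x+\rho(p_n))$, for $f\in L^p(\Bbb{Q}_a)$ with $p>1$, is already available, and since each $A_N$ is an $L^p$-contraction, Fatou's lemma gives $\ell(f)\in L^p$ with $\|\ell(f)\|_p\le\|f\|_p$; thus only the identification of $\ell(f)$ is at issue. The plan is to obtain that identification by computing the limit of $A_Nf$ in the $L^2$ norm: if $A_Nf\to g$ in $L^2(\Bbb{Q}_a)$, then passing to a subsequence along which this $L^2$-convergence also holds pointwise almost everywhere forces $g=\ell(f)$ a.e., and the computation will exhibit $\Bbb{F}(g)=m\,\Bbb{F}(f)$. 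The hypothesis $f\in L^p\cap L^2$ is used exactly here: $p>1$ for the a.e.\ input, $f\in L^2$ for the Fourier computation (on the infinite-measure group $\Bbb{Q}_a$, neither of $L^p$, $L^2$ contains the other when $p<2$).

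For the $L^2$ computation, write $A_Nf=f*\sigma_N$ with $\sigma_N=\tfrac1N\sum_{n=1}^N\delta_{-\rho(p_n)}$, a probability measure on $\Bbb{Q}_a$. The convolution theorem on the locally compact abelian group $\Bbb{Q}_a$ gives $\Bbb{F}(A_Nf)(\chi)=M_N(\chi)\,\Bbb{F}(f)(\chi)$, where $M_N(\chi)=\tfrac1N\sum_{n=1}^N\chi(\rho(p_n))$ and $|M_N(\chi)|\le1$ for every $\chi\in\widehat{\Bbb{Q}}_a=\Bbb{Q}_{a^*}$. The core step is to show $M_N(\chi)\to m(\chi)$ for each fixed $\chi$. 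Granting this, $|M_N(\chi)-m(\chi)|^2\,|\Bbb{F}(f)(\chi)|^2\le 4\,|\Bbb{F}(f)(\chi)|^2\in L^1(\Bbb{Q}_{a^*})$, so dominated convergence and Plancherel yield $\|A_Nf-g\|_2\to0$, where $g\in L^2$ is defined by $\Bbb{F}(g)=m\,\Bbb{F}(f)$ (legitimate since $|m|\le1$); together with the first paragraph this gives $\ell(f)=g$, as claimed.

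It remains to compute $\lim_N M_N(\chi)$. Writing $\rho=\alpha_0+\sum_{j=1}^k\alpha_j x^j$ and using that $\chi$ is a homomorphism, $\chi(\rho(p_n))=\chi(\alpha_0)\prod_{j=1}^k\chi(\alpha_j)^{\,p_n^{\,j}}$. Since the pairing of $\Bbb{Q}_a$ with $\widehat{\Bbb{Q}}_a=\Bbb{Q}_{a^*}$ takes values in $\Bbb{Q}/\Bbb{Z}$, each $\chi(\alpha_j)$ is a root of unity, say $\chi(\alpha_j)=e(\theta_j)$ with $\theta_j\in\Bbb{Q}/\Bbb{Z}$. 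Running this through the quotient map $\Psi:\Bbb{Q}_{a^*}\to\widehat{\Bbb{Z}}_a$, the adjoint maps $\epsilon_j$, and the explicit description of the characters of $\Bbb{Z}_a$, one identifies $\theta_j$ (in reduced form) with the fraction $m_j/B_j$ attached to $\Psi(\chi)$ in the statement, hence $D_r$ with the least common multiple of $B_1,\dots,B_k$ and $\gamma\in\Bbb{Z}[x]$ (integral because each $D_r/B_j\in\Bbb{Z}$) with the polynomial defined there; consequently $\chi(\rho(p_n))=\chi(\alpha_0)\,e(\gamma(p_n)/D_r)$. Now $n\mapsto e(\gamma(n)/D_r)$ is $D_r$-periodic; only finitely many primes divide $D_r$, and by the prime number theorem in arithmetic progressions (Dirichlet, Siegel--Walfisz) the primes are equidistributed among the $\phi(D_r)$ reduced residue classes modulo $D_r$, so $M_N(\chi)\to\chi(\alpha_0)\cdot\tfrac1{\phi(D_r)}\sum_{1\le m\le D_r,\,(m,D_r)=1}e(\gamma(m)/D_r)=\chi(\alpha_0)\,G(\Psi(\chi))$. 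The unimodular factor $\chi(\alpha_0)$ is harmless (normalise $\alpha_0=0$ by replacing $f$ with its translate by $\alpha_0$), so $\lim_N M_N(\chi)=m(\chi)$.

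I expect the main obstacle to be the arithmetic bookkeeping in the third paragraph: matching the finite data $B_j,m_j,D_r,\gamma$ read off from $\chi(\rho(p_n))$ with those produced by the recipe in the statement, and verifying thereby that $m$ is well defined on $\widehat{\Bbb{Z}}_a$, independently of the representative $l/(a_0\cdots a_r)$ of $\Psi(\chi)$; the identification $\widehat{\Bbb{Z}}_a\cong\Bbb{Q}_{a^*}/A(\Bbb{Q}_{a^*},\widehat{\Bbb{Z}}_a)$ and the maps $\Psi,\epsilon_j$ introduced before the theorem are precisely the devices for this. The analytic input --- equidistribution of primes in reduced residue classes modulo $D_r$ --- is classical; and because $\widehat{\Bbb{Z}}_a$ is a torsion group, the relevant exponential sums over primes are genuinely periodic, so no minor-arc (Vinogradov-type) estimates are needed for the identification. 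Such estimates, via Bourgain's circle-method scheme, would be needed only if one wished to re-prove the a.e.\ existence of $\ell(f)$ instead of quoting it; the standing hypotheses $\deg\rho\ge2$ and ``one of $\alpha_1,\dots,\alpha_k$ generates $\Bbb{Q}_a$'' feed into that a.e.\ existence result and keep the description nonvacuous.
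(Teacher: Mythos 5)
Your proposal is correct in outline, but it identifies the limit by a genuinely different route from the paper. The paper first notes (via Trojan's pointwise theorem and Sawyer's result, its Lemma \ref{lemma4}) that $\ell$ exists and is a translation-commuting bounded operator, hence an $L^p$ Fourier multiplier; it then computes the multiplier by evaluating $\ell$ on the dense class $I_{\Lambda_k}\sum_j b_j\chi_j$ furnished by Stone--Weierstrass, using the annihilator identity $\Bbb{F}\bigl(\lambda(\Lambda_k)^{-1}I_{\Lambda_k}\bigr)=I_{A(\Bbb{Q}_{a^*},\Lambda_k)}$ together with Lemma \ref{lemma5}, and finally treats general $f\in L^p\cap L^2$ by $L^2$ approximation and a.e.-convergent subsequences of Fourier transforms. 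You instead write $A_Nf=f*\sigma_N$, observe $|\widehat{\sigma}_N|\le 1$, prove the pointwise convergence $\widehat{\sigma}_N(\chi)\to m(\chi)$ (your third paragraph is in substance the paper's Lemma \ref{lemma5}, resting on the same Siegel--Walfisz input and on the fact that characters of $\Bbb{Q}_a$ take root-of-unity values), and then obtain $L^2$ convergence of $A_Nf$ by dominated convergence plus Plancherel, matching it with the a.e.\ limit along a subsequence. This avoids the multiplier theorem, the density argument and the annihilator computations entirely, at the cost of identifying $m$ only on $L^p\cap L^2$ (which is all the theorem asserts); the paper's route exhibits $\ell$ as a genuine $L^p$ multiplier on a concrete dense class, structure it reuses in Section \ref{S5}. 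Two small caveats: your normalisation ``set $\alpha_0=0$ by translating $f$'' does not actually remove the factor $\chi(\alpha_0)$ from the final formula --- with the paper's definition of $\gamma$ (no constant term) the limit of $\widehat{\sigma}_N(\chi)$ is $\chi(\alpha_0)G(\Psi(\chi))$ --- but this discrepancy is already present in the paper's own proof of Lemma \ref{lemma5}, so you are no worse off; and the ``arithmetic bookkeeping'' you defer (matching $m_j/B_j$, $D_r$, $\gamma$ with the data attached to $\Psi(\chi)$, and well-definedness in the representative $l/(a_0\cdots a_r)$) is precisely what that lemma's proof carries out, so it can be quoted rather than redone.
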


\bigskip
\begin{theorem}
\label{thm3}
Suppose that $p \in (1, 2]$ and that $f \in L^p ( \Bbb{Z}_a )$.   Also suppose that
$$
\rho (n) = \alpha _k n^k + \alpha _{k-1}n^{k-1} + \ldots \ + \alpha _1 n + \alpha _0,
$$
has degree 
at least $2$, with one of the numbers 
$\{ \alpha _1 , \ldots , \alpha _k \} \subset \Bbb{Z}_a$ a generator of $\Bbb{Z}_a$.  Then
$$
\lim _{N\to \infty} {1\over N} \sum _{n=1}^Nf(x + \rho (p_n) ) = \ell (f)(x),
$$
for $\ell (f) \in L^p(\Bbb{Z}_a)$ almost everywhere with respect to Haar measure on $\Bbb{Z} _a$.  Here $\Bbb{F}( \ell (f)) ( \chi ) = G(\chi ) \Bbb(f)( \chi )$
for almost all $x$ with respect to Haar measure on $\Bbb{Z}_a$.
\end{theorem}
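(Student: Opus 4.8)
The group $\Bbb{Z}_a = \Lambda_0$ is a compact open subring of $\Bbb{Q}_a$, and the polynomial $\rho$ maps $\Bbb{Z}$ into $\Bbb{Z}_a$ whenever the coefficients $\alpha_0,\dots,\alpha_k$ lie in $\Bbb{Z}_a$; in particular each translate $x + \rho(p_n)$ stays in $\Bbb{Z}_a$ when $x\in\Bbb{Z}_a$, so the averages $A_Nf(x)$ make sense intrinsically on $\Bbb{Z}_a$. First I would fix $f\in L^p(\Bbb{Z}_a)$ with $p\in(1,2]$ and extend it by zero to a function $\tilde f$ on $\Bbb{Q}_a$. Since $\Bbb{Z}_a$ has finite Haar measure, $\tilde f\in L^p(\Bbb{Q}_a)\cap L^2(\Bbb{Q}_a)$, so Theorem \ref{thm2} applies and gives convergence of $A_N\tilde f(x)$ to $\ell(\tilde f)(x)$ for a.e.\ $x\in\Bbb{Q}_a$, hence in particular for a.e.\ $x\in\Bbb{Z}_a$. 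For such $x$ the translates $x+\rho(p_n)$ all lie in $\Bbb{Z}_a$, so $\tilde f(x+\rho(p_n)) = f(x+\rho(p_n))$ and the averages agree: $A_N\tilde f(x) = A_Nf(x)$. Thus $\lim_N A_Nf(x) = \ell(\tilde f)(x)$ exists a.e.\ on $\Bbb{Z}_a$, and we set $\ell(f) := \ell(\tilde f)|_{\Bbb{Z}_a}$.

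**The remaining task is to identify the Fourier transform of $\ell(f)$ on $\Bbb{Z}_a$.** By Theorem \ref{thm2}, $\Bbb{F}(\ell(\tilde f))(\chi) = m(\chi)\Bbb{F}(\tilde f)(\chi)$ on $\Bbb{Q}_{a^*}$, where $m = G\circ\Psi$ and $\Psi:\Bbb{Q}_{a^*}\to\hat{\Bbb{Z}}_a$ is the quotient map identifying $\hat{\Bbb{Z}}_a$ with $\Bbb{Q}_{a^*}/A(\Bbb{Q}_{a^*},\hat{\Bbb{Z}}_a)$. The key structural fact I would invoke is the standard duality for a compact open subgroup: restriction of functions from $\Bbb{Q}_a$ to $\Bbb{Z}_a$ corresponds on the dual side to the quotient map $\Psi$ (Pontryagin duality, as in \cite{hewittross}), and the Fourier transform intertwines the zero-extension $f\mapsto\tilde f$ with $\Bbb{F}(f)\circ\Psi$ up to the normalizing constant $\lambda(\Bbb{Z}_a)$. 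Concretely, for a character $\chi_{l/(a_0\cdots a_r)}\in\hat{\Bbb{Z}}_a$ one computes $\Bbb{F}(\tilde f)(\chi) = \int_{\Bbb{Z}_a} f(x)\overline{\chi_{l/(a_0\cdots a_r)}(x)}\,d\lambda(x) = \Bbb{F}(f)(\chi_{l/(a_0\cdots a_r)})$ for any lift $\chi$, so that $\Bbb{F}(\ell(\tilde f))(\chi) = G(\Psi(\chi))\,\Bbb{F}(f)(\Psi(\chi))$; passing to the quotient this reads $\Bbb{F}(\ell(f))(\chi) = G(\chi)\,\Bbb{F}(f)(\chi)$ for $\chi\in\hat{\Bbb{Z}}_a$, which is exactly the claimed formula. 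It also follows that $\ell(f)\in L^p(\Bbb{Z}_a)$ since $\ell(\tilde f)\in L^p(\Bbb{Q}_a)$ and its support lies in $\Bbb{Z}_a$ (the averages $A_N\tilde f$ vanish off $\Bbb{Z}_a + \langle\rho(p_n)\rangle\subseteq\Bbb{Z}_a$).

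**The main obstacle is bookkeeping rather than conceptual.** The delicate point is the precise compatibility between the three descriptions of $\hat{\Bbb{Z}}_a$ — as rationals $\ell/A_r$, as a quotient of $\Bbb{Q}_{a^*}$, and via the evaluation formula for $\chi_t$ — and making sure the definition of $G$ on $\hat{\Bbb{Z}}_a$ (via the integers $D_r$ and the polynomial $\gamma$, which depend only on the class $l/(a_0\cdots a_r)$ and not on a chosen representative in $\Bbb{Q}_{a^*}$) is genuinely well-defined on the quotient, so that $m = G\circ\Psi$ is legitimate and the pushforward identity $\Bbb{F}(\ell(f)) = G\cdot\Bbb{F}(f)$ makes sense as an equality of functions on $\hat{\Bbb{Z}}_a$. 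One must also confirm that the restriction of $m$ to (a set of representatives of) $\hat{\Bbb{Z}}_a\subset\Bbb{Q}_{a^*}$ recovers $G$, which is immediate from $m = G\circ\Psi$ and $\Psi$ being identity-like on those representatives. Once these identifications are pinned down, the a.e.\ convergence and the $L^p$ membership are inherited directly from Theorem \ref{thm2}, and no new analytic estimates are needed.
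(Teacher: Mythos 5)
There is a genuine gap at the very first step of your reduction. You extend $f\in L^p(\Bbb{Z}_a)$ by zero to $\tilde f$ on $\Bbb{Q}_a$ and assert that ``since $\Bbb{Z}_a$ has finite Haar measure, $\tilde f\in L^p(\Bbb{Q}_a)\cap L^2(\Bbb{Q}_a)$.'' That inclusion goes the wrong way: on a finite measure space one has $L^2\subset L^p$ for $p\le 2$, not $L^p\subset L^2$. For $1<p<2$ an arbitrary $f\in L^p(\Bbb{Z}_a)$ need not lie in $L^2(\Bbb{Z}_a)$, so its zero-extension need not lie in $L^2(\Bbb{Q}_a)$, and Theorem \ref{thm2} (whose hypothesis is $f\in L^p\cap L^2(\Bbb{Q}_a)$) cannot be invoked for $\tilde f$. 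Note that Theorem \ref{thm3} deliberately assumes only $f\in L^p(\Bbb{Z}_a)$, so the theorem is not a formal corollary of Theorem \ref{thm2} by restriction alone; the missing $L^2$ hypothesis is exactly the point that has to be addressed, and your proposal passes over it.

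The repair is the route the paper takes: work directly on the dense subclass. Approximate $f$ in $L^p(\Bbb{Z}_a)$ by functions of the form \eqref{eq4_1} with $k=0$, i.e.\ $I_{\Lambda_0}\sum_j b_j\chi_j$; these are bounded, so they lie in every $L^q$, the multiplier computation of Section \ref{S4} (via Lemma \ref{lemma5}) applies to them verbatim, and one then passes to general $f\in L^p(\Bbb{Z}_a)$ using the maximal inequality of Lemma \ref{lemma4} for the almost everywhere convergence, together with the fact that on the compact group $\Bbb{Z}_a$ the Fourier coefficients are defined on $L^1\supset L^p$ and are continuous under $L^p$ convergence, so the identity $\Bbb{F}(\ell(f))(\chi)=G(\chi)\Bbb{F}(f)(\chi)$ survives the limit. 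Your second paragraph (the compatibility of restriction and zero-extension with the quotient map $\Psi$, and $m=G\circ\Psi$ recovering $G$ on $\hat{\Bbb{Z}}_a$) is sound bookkeeping and consistent with the paper's identifications, but it only identifies the multiplier once convergence and the multiplier identity are available for the class of functions in question, which is the step your proposal does not secure for general $L^p$.
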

\bigskip
A measurable transformation $T: X \to X$ of a measure space $(X, \beta , \mu )$ is called measure preserving if $\mu ( T^{-1}(A)) = \mu (A)$ for all $A \in \beta$.   Here $T^{-1}(A)) := \{ x \in X : Tx \in A \}$.  For a measure space $(X, \beta , \mu )$, let $T_1, \ldots , T_k$ be a finite case set of commuting measure preserving transformations. Given $f \in  L^p (X, \beta , \mu )$ if $\pi _N$ denotes the number of prime numbers in the interval
$[1, N]$, for $(N=1,2, \ldots )$ we set
\begin{equation}
\label{eq1_1}
A_Nf(x):={1\over \pi _N } \sum _{1\leq p \leq N}f(T_1^{p}T_2^{p^2} \ldots T_k^{p^k} x). 
\end{equation}
Here the summation parameter $p$ runs over the 
primes.  The pointwise convergence of 
these averages is proved in 
\cite{trojan}.  
This is the new tool, 
that enables us to prove 
Theorems \ref{thm1} -- \ref{thm3} 
going beyond the results 
in \cite{nair3} and 
\cite{nair4}.
\bigskip

{\sl Some background :}  A sequence $(x_n)_{n\geq 1}$ in a compact topological group is said to be uniformly distibuted on $G$ if for each continuous functions $f: G \to {\Bbb{C}}$ we have 
$$
\lim _{N\to \infty} {1\over N} \sum _{n=1}^Nf(x_n ) = \int _Gfd\lambda .
$$
An example is $x_n= \{ n\alpha \} \ (n=1,2, \ldots )$, where $\alpha$ is an irrational number and for a real number $y$ we have used $\{ y \}$ to denote its fractional part.  This early result was due to  P. Bohl, W. Sierpinski and H. Weyl independently around the start of the 20th century. See the notes in 
\cite{kuipers}
for the historic background.  
Subsequently it was shown by 
H. Weyl \cite{weyl} that of 
$\rho (x) = \alpha _0 + \alpha _1 x + \ldots 
+ \alpha _kx^k$ 
and one of the numbers $\{ \alpha _1, \ldots , 
\alpha _k \}$ is irrational 
then the sequences $x_n= \rho (n) \ 
( n=1,2, \ldots )$ is uniformly distributed 
modulo $1$. 
Later I. M. Vinogradov and G. Rhin 
\cite{rhin}
proved that $x_n= \rho (p_n) \ ( n=1,2, \ldots )$ 
is uniformly distributed 
modulo $1$.  
Another result with a form similar in statement 
is that the result of 
Bohl, Sierpinski is that if $f$ is Lebesgue integrable on $[0,1)$ then
\begin{equation}
\label{eq1_2}
\lim _{N\to \infty}{1\over N} \sum _{n=1}^Nf(\{  x +  \alpha n \} ) = \int _0^1 f(t)dt , 
\end{equation}
almost everywhere with respect to Lebesgue 
measure.   This is an immediate consequence of 
Birkhoff's pointwise ergodic theorem and the 
ergodicty of the Lebesgue measure preserving map 
$T:x \to x+ \alpha$ for irrational $\alpha$ 
on $[0,1)$.  
An issue addressed by a number of authors is 
whether $(\{ n\alpha \})_{n\geq 1}$  in
\eqref{eq1_2} 
can be replaced by either 
$( \rho (n) )_{n\geq 1}$  or 
$( \rho (p_n) )_{n\geq 1}$ possibly under 
additional conditions on $f$.  
See for instance \cite{koksmasalem}, 
where the following 
is shown.  
Suppose  that  
$\rho (n) = \alpha _0 + \alpha _1 n+ \ldots , \alpha _k n^k$ with $(\alpha _1 , \ldots , \alpha _k )
 \notin {\Bbb{Q}}^k$. 
Also for 
$f \in L^2 ([0,1))$  with Fourier coefficients $(c_n)_{n\in {\Bbb{Z}}}$  suppose for some $\gamma > 0$  that 
\begin{equation}
\label{eq1_4}
\sum _{|n|\geq N} |c_n|^2 =O(\log _e N)^{\gamma}). 
\end{equation}
Then
\begin{equation}
\label{eq1_5}
\lim _{N\to \infty}{1\over N} \sum _{n=1}^Nf(\{  x +  \rho (n) \} ) = \int _0^1 f(t)dt , 
\end{equation}
almost everywhere with respect to Lebesgue 
measure on $[0,1)$.  

The second author was introduced to this topic 
when asked in private communication independently 
and at different times by R.C. Baker and M. 
Weber, whether condition \eqref{eq1_4} 
could be weakened or removed.
It turns out the answer is yes under the 
additional assumption 
that $f\in L^p([0,1))$ for $p>1$ and indeed in 
more general settings than $G=[0,1)$.  
This is the content of the papers 
\cite{nair3}, \cite{nair4}, 
and the current paper.  
The condition $p\not= 1$ is essential however 
because of the results in \cite{buczolichmauldin}.  
Notice the $x$ in \eqref{eq1_2} 
cannot in general be chosen to be $0$.  
This is the implication of 
J. Marstrand's famous result \cite{marstrand},
that there exists a $G_{\delta}$ set $B$ such 
that if $f(x) = I_{B}(x)$, i.e. 
the indicator set 
of $B$, then the limit in 
\eqref{eq1_2} 
fails to exist 
with $x=0$ for almost all $\alpha$.

\bigskip
\section{Some lemmas }
\label{S2}   
The following is a special case of S. Sawyer's Theorem \cite{sawyer}.

\bigskip

\begin{lemma}
\label{lemma4}
The pointwise convergence of the averages $(A_Nf)_{N \geq 1}$ in 
\eqref{eq1_1}
for $p>1$ implies that if 
$$
Mf(x) = \sup _{N\geq 1}\left |A_Nf(x)  \right |,  
$$
then there exists $C > 0$ such that $||Mf||_p \leq C||f||_p$.
\end{lemma}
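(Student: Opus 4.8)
The plan is to obtain the strong $(p,p)$ bound by combining Sawyer's maximal principle (which, as asserted above, applies to the averages \eqref{eq1_1}) with the trivial boundedness of $M$ on $L^{\infty}$, and then interpolating.

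First I would record the elementary properties of the operators $A_N$. Since $T_1,\ldots,T_k$ commute and preserve $\mu$, each composition $f\mapsto f\circ(T_1^{p}T_2^{p^2}\cdots T_k^{p^k})$ is an isometry of $L^q(X,\beta,\mu)$ for every $q\in[1,\infty]$; as $A_N$ is a convex average of $\pi_N$ such isometries, $||A_Nf||_q\le||f||_q$ for all $q$ and all $N$. In particular $Mf\le||f||_\infty$ pointwise, so the sublinear operator $M$ is of strong (hence weak) type $(\infty,\infty)$. Moreover each $A_N$ commutes with $T_1,\ldots,T_k$, and hence with the measure-preserving action of $\mathbb{Z}^k$ that these generate; this commutation is exactly the structural hypothesis under which Sawyer's theorem \cite{sawyer} produces a maximal inequality from pointwise boundedness, and it is the content of the ``special case'' referred to above.

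Next, fix $q>1$. By hypothesis (understood for every exponent exceeding $1$, as in \cite{trojan}) $\lim_N A_Nf$ exists $\mu$-almost everywhere for every $f\in L^q$; a pointwise convergent sequence is pointwise bounded, so $Mf<\infty$ $\mu$-a.e. for every $f\in L^q$. Sawyer's principle then furnishes a constant $C_q$ such that
$$\mu\big(\{\, x : Mf(x) > \lambda \,\}\big) \ \le\ \Big( \frac{C_q\, ||f||_q}{\lambda} \Big)^{q} \qquad (\lambda > 0,\ f \in L^q),$$
that is, $M$ is of weak type $(q,q)$.

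Finally, given $p>1$, choose $q$ with $1<q<p$. The operator $M$ is sublinear, of weak type $(q,q)$ and of weak type $(\infty,\infty)$, so the Marcinkiewicz interpolation theorem yields strong type $(r,r)$ for every $r\in(q,\infty)$; taking $r=p$ gives $||Mf||_p\le C\,||f||_p$ with $C=C(p)$, as claimed. I expect the only delicate point to be the precise invocation of Sawyer's theorem --- verifying that the commutation of the $A_N$ with the ambient $\mathbb{Z}^k$-action (and, if one prefers the ergodic formulation in which Sawyer's result is classically stated, the reduction through the ergodic decomposition of that action) really places us within its scope, together with the elementary observation that a.e. convergence forces a.e. finiteness of $Mf$. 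The remaining steps, namely the $L^\infty$ bound and the interpolation, are routine.
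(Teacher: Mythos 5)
Your proposal is correct, and it follows the same route the paper intends: the paper offers no written proof at all, simply declaring the lemma ``a special case of S. Sawyer's Theorem''. What you add is the bridge that this declaration glosses over. Sawyer's theorem, applied to the positive operators $A_N$ (which commute with the measure-preserving $\mathbb{Z}^k$-action generated by $T_1,\ldots,T_k$), converts a.e.\ finiteness of $Mf$ --- which you correctly deduce from a.e.\ convergence --- into a \emph{weak}-type $(q,q)$ bound only; the strong bound $\|Mf\|_p\le C\|f\|_p$ asserted in the lemma is not literally a special case of it. Your upgrade via the trivial $L^\infty$ bound and Marcinkiewicz interpolation is exactly the standard way to repair this, and your choice of an auxiliary exponent $q\in(1,p)$ is legitimate because the input convergence result of Trojan holds for every exponent exceeding $1$ (note that weak $(p,p)$ alone would not interpolate to strong $(p,p)$, so this choice is genuinely needed). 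The one caveat you flag is the right one: Sawyer's theorem requires the commuting family to be ergodic (or one must pass through an ergodic decomposition with uniform constants); in the paper's actual applications this is unproblematic, since translation by a topological generator of the monothetic group is ergodic for Haar measure, but in the abstract formulation of the lemma it deserves exactly the attention you give it.
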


\medskip

\begin{lemma}
\label{lemma5}
For each $\chi =\chi _{l\over a_o \ldots a_r} \in \Bbb{Z}_{a^*}$  we have
$$
G(\chi ) = \lim _{N\to \infty}{1\over \pi _N} \sum _{1\leq  p \leq N} \chi (\rho ( p)),
$$
where $p$ runs over the prime numbers in $[1,N]$.  Further for $\chi \in \Bbb{Q}_{a^*}$ we have
$$
m(\chi ) = \lim _{N\to \infty} {1\over \pi _N} \sum _{1\leq p \leq N} \chi ( \rho (p) ).
$$
\end{lemma}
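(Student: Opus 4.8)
The plan is to fix a character, collapse the average over primes to an average of roots of unity over finitely many residue classes, and then invoke the prime number theorem for arithmetic progressions; since the modulus that occurs is fixed, no exponential-sum estimates are needed here. Fix a character $\chi=\chi_{l/(a_0\cdots a_r)}$ of $\Bbb{Z}_a$ and write $\rho(p)=\alpha_0+p\alpha_1+p^2\alpha_2+\cdots+p^k\alpha_k$, where $p^j\alpha_j$ denotes the $p^j$-fold sum of $\alpha_j$ in $\Bbb{Z}_a$. Additivity of $\chi$ gives $\chi(\rho(p))=\chi(\alpha_0)\prod_{j=1}^{k}\chi(\alpha_j)^{p^j}$. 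Evaluating the explicit formula for $\chi$ at each $\alpha_j$ shows that $\chi(\alpha_j)$ is a root of unity, which in the notation introduced before the statement equals $e(l_j/(a_0\cdots a_r))$; writing $l_j/(a_0\cdots a_r)=m_j/B_j$ in lowest terms and putting $D_r=\mathrm{lcm}(B_1,\dots,B_k)$, we have $B_j\mid D_r$, so $\chi(\alpha_j)^{p^j}=e\!\left((D_rm_j/B_j)\,p^j/D_r\right)$. Collecting the factors — the constant term $\alpha_0$, if nonzero, merely contributes the fixed unimodular factor $\chi(\alpha_0)$, which is absorbed as in the statement, or removed by replacing $f$ with its translate by $\alpha_0$, all hypotheses and conclusions being translation invariant — we obtain the key identity
$$\chi(\rho(p))=e\!\left(\frac{\gamma(p)}{D_r}\right),\qquad\gamma(x)=\sum_{j=1}^{k}\frac{D_rm_j}{B_j}\,x^j\in\Bbb{Z}[x],$$
with $\gamma$ exactly the polynomial defined before the statement.

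Since $\gamma$ has integer coefficients, the function $p\mapsto e(\gamma(p)/D_r)$ depends only on the class of $p$ modulo $D_r$. Partition the primes $p\le N$ according to this class. The only primes lying in a class $m$ with $(m,D_r)>1$ are the finitely many prime divisors of $D_r$, whose combined contribution is $O(1)$ and is annihilated after division by $\pi_N$. For each of the $\phi(D_r)$ classes $m$ with $(m,D_r)=1$, the prime number theorem for arithmetic progressions gives $\#\{p\le N:\ p\equiv m\ (\mathrm{mod}\ D_r)\}=\pi_N/\phi(D_r)+o(\pi_N)$. Hence
$$\lim_{N\to\infty}\frac{1}{\pi_N}\sum_{1\le p\le N}\chi(\rho(p))=\frac{1}{\phi(D_r)}\sum_{\substack{m=1\\(m,D_r)=1}}^{D_r}e\!\left(\frac{\gamma(m)}{D_r}\right)=G(\chi),$$
which is the first assertion.

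For $\chi\in\Bbb{Q}_{a^*}$ the same computation applies: each $\alpha_j$ lies in a compact open subgroup of $\Bbb{Q}_a$ on which the continuous character $\chi$ has finite image, since $\Bbb{Q}_a$ is totally disconnected, so $\chi(\alpha_j)$ is again a root of unity and $\chi(\rho(p))=e(\gamma^{*}(p)/D)$ for suitable integer data $D,\gamma^{*}$; the prime number theorem for arithmetic progressions then yields the limit $\phi(D)^{-1}\sum_{(m,D)=1}e(\gamma^{*}(m)/D)$. The remaining point is to recognise this as $m(\chi)=G(\Psi(\chi))$, where $\Psi:\Bbb{Q}_{a^*}\to\hat{\Bbb{Z}}_a$ is the quotient by the annihilator of $\hat{\Bbb{Z}}_a$: unwinding the definition of $\Psi$, the character $\Psi(\chi)=\chi_{l/(a_0\cdots a_r)}$ has the same values on $\alpha_1,\dots,\alpha_k$, so the data $(D_r,\gamma)$ attached to it by the recipe before the statement coincide with $(D,\gamma^{*})$ and the two sums agree.

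The only genuine obstacle I foresee is this last piece of bookkeeping — identifying the exponential sum that falls out of the character computation with the combinatorially defined quantities $G(\chi)$ and $m(\chi)$, keeping careful track of the reductions to lowest terms, the least common multiple $D_r$, the constant term $\alpha_0$, and the quotient map $\Psi$ — together with the elementary observation that a continuous character of $\Bbb{Q}_a$ has finite order on each fixed element. The analytic input, the prime number theorem for arithmetic progressions at a fixed modulus, is entirely classical and enters only as a black box; in particular, and in contrast with the connected case, no Weyl-type sum estimates are required.
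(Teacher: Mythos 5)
Your proposal is correct and follows essentially the same route as the paper: the explicit character formula reduces $\chi(\rho(p))$ to $e(\gamma(p)/D_r)$ with the same data $m_j/B_j$ and $D_r$, the limit is then extracted from the equidistribution of primes among the reduced residue classes modulo the fixed modulus $D_r$ (the classes with $(m,D_r)>1$ being negligible), and the second assertion comes from the identity $\chi(x)=\Psi(\chi)(x)$ for $x\in \Bbb{Z}_a$. The only real deviation is the analytic input: you invoke the prime number theorem for arithmetic progressions directly on the prime counts, whereas the paper passes by partial summation to $\Lambda$-weighted sums and cites Siegel--Walfisz; since $D_r$ is fixed as $N\to\infty$, the two are interchangeable and your version is marginally more economical. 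You are also more explicit than the paper about the constant term, whose fixed factor $\chi(\alpha_0)$ the paper's displayed computation silently drops.
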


\begin{proof}
Note that
$$
\chi _{l\over a_0 \ldots a_r}(\rho (p_n)) = \chi _{l\over a_0 \ldots a_r}( \alpha _0+ \alpha _1p_n + \ldots + \alpha _k p_n^k)
$$
 $$
= \prod _{j=0}^k \chi _{l\over a_0 \ldots a_r}(\alpha _j )^{p_n^j}
$$ 
$$
={\prod} _{j=0}^k 
\left (e^{2\pi i {l\over a_o \ldots a_r}^{p_n^j(\alpha _j(0)+\alpha _j(1)a_1+ \ldots  + \alpha _j (r-1)a_0a_1 \ldots a_{r-2})}}\right )
$$
$$
= \prod _{j=0}^k  \left ( e^{2\pi i {l_j\over a_o \ldots a_r}} \right  )^{p_n^j}
 =\prod _{j=0}^k  \left  (e^{2\pi i m_j\over B_j} \right )^{p_n^j}  =
 e^{2\pi i{\gamma (p_n) \over D_r}}.
$$
By splitting the primes into their reduced residue classes modulo $D_r$ we have
$$
{1\over \pi _N} \sum _{1\leq p \leq N} \chi _{l\over a_0 \ldots a_r}(\rho (p)) =
{1\over \pi _N} \sum _{1\leq p \leq N}    e^{2\pi i{\gamma (p) \over D_r}}. \eqno (N=2,3, \ldots )
$$
Let $\Lambda: \Bbb{N} \to \Bbb{R}$ denote the Von Mangoldt function defined by $\Lambda (n) = \log _e p$ if $n=p^l$ for some prime $p$ and positive integer $l$, and zero otherwise.  Using partial summation we see that
$$
{1\over \pi _N} \sum _{1\leq p \leq N} e^{2\pi i{\gamma (p) \over D_r}} =
{1\over N} \sum _{1\leq n \leq N} \Lambda (n) e^{2\pi i \gamma (n)\over D_r} +O(( \log _e N)^{-1} ).
$$
The Siegel -Walfish prime number theory for arithmetic progressions, says that for a fixed positive $u$, if $1 \leq D_r \leq ( \log _e N)^u$ and $(m, D_r)=1$, then for some $C> 0$
$$
 \sum _{1 \leq n \leq N \atop n\equiv m \bmod D_r} \Lambda (n) = {N \over \phi (D_r)} + o(Ne^{-C(\log _e N)^{1\over 2} }).
$$
Now note that
$$
\sum _{1 \leq n \leq N} \Lambda (n)
e^{2\pi i \gamma (n)\over D_r} 
= \left(  \sum _{m=1 \atop (m,D_r)=1}^{D_r}
e^{2\pi i{\gamma (m) \over D_r}} \right) 
\left (  \sum _{1 \leq n \leq N \atop n\equiv m\bmod D_r} \Lambda (n)  \right )
$$
$$
+O \left ( \sum _{p ^l \leq N; p|D_r}  \Lambda (p^l)e^{2\pi i \gamma (p^l)\over D_r} \right )
$$
Using the fact that the sum inside the $O$ 
notation is $O((\log _e N) (\log _e \log N))$, 
and the Siegel-Walfish theorem
$$
{1\over N}  
\sum _{1 \leq n \leq N \atop n\equiv m \bmod D_r} \Lambda (n)
e^{2\pi i \gamma (n)\over D_r} 
=
\left( {1\over \phi (D_r)} 
\sum _{1 \leq n \leq N} \Lambda (n)
e^{2\pi i \gamma (n)\over D_r}  \right)
$$
$$
+ O \left ({(\log _e N) (\log _e \log _e N) \over N} \right ),
$$
which proves the first part of 
Lemma \ref{lemma5}.   
The second part follows from the 
observation for all $x \in \Bbb{Z}_a$ 
and $\chi  \in \Bbb{Q}_{a^*}$ we have 
$\chi (x) = \Psi (\chi )(x)$. 
\end{proof}

\bigskip
\section{Proof of Theorem \ref{thm1} }
\label{S3}  
For the averages \eqref{eq1_1},  
let $X$ be $G$, let $\mu$ be Haar measure 
$\lambda$  on $G$ and let $\beta$ denote 
Haar measurable sets in $G$.  
Then if we set $T_i= x+ \alpha _i$ for 
$i=1, \dots , k$, for $N=1,2,\ldots $ and set
$$
a_N(f,x) = A_Nf(x) :={1\over  \pi_N } \sum _{1\leq p \leq N}f(x+ \rho (p)). 
$$
We wish to show $a_N(f,x)$ tends to 
$\int _Gfd\lambda$ almost everywhere 
with respect to Haar measure, as $N$ tends to 
$\infty$. 
\bigskip
First suppose $f:G \to \Bbb{C}$ is continuous.  
Because $G$ is a compact, 
connected 
and monothetic, for each non-trivial 
character $\chi$, and each 
generator $\chi (\alpha ) = e^{2\pi i\alpha _*}$ 
for an irrational number $\alpha_*$ 
(\cite{kuipers}, p. 275).
Thus
$\phi (\rho (n)) = e^{2\pi i \rho_{*}(n)}$, for a polynomial $\rho _*$ defined on the real numbers, with at least one irrational coefficient other than the constant term.  This means that
$$
{1\over N} \sum _{n=1}^N \chi (\rho (p_n))= {1\over N} \sum _{n=1}^N e^{2\pi i \rho _* (p_n)}. \eqno (N=1,2, \ldots )
$$
Now, if $\{ y \}$ denotes the fractional part of a real number $y$, then the sequence $(\{ \rho _* ( p_n) \})_{n\geq 1}$ is uniformly distributed modulo $1$.  Hence by Weyl's criterion on $G$, $a_N(f, x)$ tends to $\int _Gfd\lambda $ for all continuous $f: G \to \Bbb{C}$.
We now deal with the general case of Theorem 1.  Suppose $(f_n)$ is a sequence of continuous functions converging to fixed $f \in L^p$.  Choose $(n_k)_{k\geq 1}$ such that
$$
\sum _{k\geq 1}\int _G |f-f_{n_k}|^pd\lambda < \infty .
$$
This means
$$
\sum _{k\geq 1} |f-f_{n_k}|^p < \infty ,
$$
almost everywhere with respect to Haar measure on $G$.  Thus for each $\epsilon > 0$, there exists a sequence of functions 
$(f_{\epsilon , k})_{k\geq 1}$ such that $||f-f_{\epsilon , k}||_p^p \leq \epsilon ^{2k}$ and $f_{\epsilon , k}$ tends to $f$ as $k$ tends to infinity, almost everywhere with respect to $A$ with respect to Haar measure.  Let
$$
m(f) = \sup _{N\geq 1}|\alpha _N f|.
$$
Notice that $m$ is sub-additive  i.e.
$$
m(f+g) \leq m(f) + m(g).
$$
Let 
$$
E_{\epsilon ,k} = \{ x \in G : m(f-f_{\epsilon , k})(x)> e^{k\over p} \} .
$$
Notice that,
$$
\mu (E_{\epsilon , k}) \leq \left ( {1\over \epsilon } \right )^k \int _{E_{\epsilon , k}}[m(f-f_{\epsilon , k})(x)]^p d\lambda 
$$
$$
\leq  \left ( {1\over \epsilon } \right )^k|| m(f-f_{\epsilon , k})(x)||_P^P,
$$
which, using the fact that $||Mf||_p \leq C_p||f||$, is 
$$
\leq  c\left ( {1\over \epsilon } \right )^k|| ((f-f_{\epsilon , k})(x)||_P^P,
$$
$$
\leq  c \left ( {1\over \epsilon } \right )^k .\epsilon ^{2k} = C\epsilon ^k.
$$
Now
$$
a_N(f, x) =a_n(f-f_{\epsilon , k}, x) + a_N(f_{\epsilon , k}, x).
$$
This means that
$$
\left |a_N(f, x) -\int _Gfd\lambda | \leq |a_N(f-f_{\epsilon , k}, x)| +|a_N(f_{\epsilon , k}, x) - \int_Gfd \lambda \right |
$$
We have already shown that
$$
\lim _{N \to \infty}a_N (f_{\epsilon , k}, x) = \int_G f_{\epsilon, k}d\lambda ,
$$
almost everywhere with respect to Haar measure on $G$.  Therefore
$$
\limsup _{N\to \infty} \left  |a_N(f, x) -\int _Gfd\lambda \right | \leq \limsup _{N\to \infty} \left |a_N(f-f_{\epsilon , k}, x) \right | +
$$
$$
+ \left  | \int _G fd\lambda  - \int _G f_{\epsilon , k}d\lambda  \right |
$$
$$
\leq m(f- f_{\epsilon , k})+ \int _G\left  |f-f_{\epsilon , k}\right | d\lambda  .
$$
Thus as $N$ tends to infinity we know $a_N(f, x)$ tends to $\int _G fd\lambda $ for all $x$ in $A_{\epsilon}= \cup _{k\geq 1} E_{\epsilon , k}$. Let $B_{\epsilon}$ denote the null set of which $f_{\epsilon , k}$ tends to $f$ as $k$ tends to infinity. This means that
$$
\mu (A_{\epsilon}\cup B_{\epsilon}) \leq \sum _{k\geq 1} \mu (E_{\epsilon , k}) \leq C \sum _{k\geq 1}\epsilon ^k = {C\epsilon \over 1- \epsilon}.
$$
Letting $\epsilon \to 0$ completes the 
proof of Theorem \ref{thm1}, 
upon noting that 
$\pi _N \sim {N\over \ln (N)}$ by the 
prime number theorem. 

\bigskip
\section{Proof of Theorems \ref{thm2} and \ref{thm3} }
\label{S4}  
Fix $p\in (1, \infty)$, 
let $f \in L^p(\Bbb{Q}_a)$, and suppose the 
support of $f$ is contained  in $\Lambda _k$ for 
some non-positive integer $k$.  
This means $f = f I_{\Lambda _k}$, where  for a 
set $A$, we let $I_A$ denotes the indicator 
function.  
Applying the Stone-Weierstrass theorem shows that 
$f$ can be approximated in the $L^p$ norm 
functions of the form 
\begin{equation}
\label{eq4_1} 
I_{\Lambda _k} \sum _{j=1}^{\nu } b_j \chi _j, 
\end{equation}
with each $b_j \in \Bbb{C}$ and $\chi \in \Bbb{Q}_{a^*}$.  
Because compactly supported functions are dense in $L^p( {\Bbb{Q}_a })$, it follows functions of the form are also dense in  $L^p( {\Bbb{Q}_a })$.  
It is clear for each $N$  that the averaging operator $A_N$  commutes with translations on $\Bbb{Q}_a$.  
We know by Lemma \ref{lemma4} 
that the function $\ell f(x)$ exists almost everywhere.  
Also as a consequence of Lemma \ref{lemma4}  
and the Lebesgue dominated covergence theorem, 
it follows 
that the functions $(A_Nf(x))_{N\geq 1}$ also 
converges in $L^p$ to the same limit.  
Evidently the operator $\ell $ commutes with 
translations on $\Bbb{Q}_a$.  
It follows that the linear operator $\ell $ is a 
Fourier multiplier on $L^p(\Bbb {Q}_a )$.  
Hence there is a bounded measurable 
function $m$ on $\Bbb{Q}_a$ such that 
for all $f \in L^p \cap L^2( \Bbb{Q}_a )$  
we have
$$
\Bbb{F} ( \ell f)\ (\chi ) = m(\chi ) \Bbb{F}(f)(\chi ),
$$
almost everywhere in $\Bbb{Q}_a$.  To identify $m$ we evaluate $\ell$ on functions of the form (4.1) we note that
$$
\Bbb{F}(f)(\xi) = \sum _{j=1}^{\nu } b_j \Bbb{F}(\chi _j I _{\Lambda _k})(\xi ) = \sum _{j=1}^{\nu } b_j \Bbb{F}(  I _{\Lambda _k})
(\xi+ \chi _j )
$$ 
\begin{equation}
\label{eq4_2}
= \lambda (\Lambda _k) \sum _{j=1}^{\nu } b_j  I _{  \chi _j +A(\Bbb{Q}_{a^*}, \Lambda _k ) }(\xi) ,  
\end{equation}
where $A(\Bbb{Q}_{a^*}, \Lambda _k )$ denotes the annihilator in $\Bbb{Q}_{a^*}$ of $\Lambda _k$.  The penultimate identity follows from the fact that multiplication by a character shifts the Fourier transform.  The last identity follows from the identity
\begin{equation}
\label{eq4_3}
\Bbb{F} \left   ( 
{1\over \lambda  (\Lambda _k)} 
 I_{\Lambda _k }  
\right )   
= I _{ A(\Bbb{Q}_{a^*}, \Lambda _k ) }.
\end{equation}

Indeed if $\chi \in \Bbb{Q}_{a^*}$ 
the restriction of $\chi$ to  
$\Lambda _k$ is plainly a continuous 
character of $\Lambda _k$ and 
\eqref{eq4_3} 
follows since the Haar integral on $\Lambda _k$ of any non-trivial character of $\Lambda _k$ is zero 
(\cite{hewittross}, Lemma 23.19, p. 363).
For $f$ as in \eqref{eq4_1} 
we have
$$
A_Nf(x) ={1\over N} \sum _{n=1}^N I_{\Lambda _k} (x + \rho (p_n) \sum _{j=1}^{\nu} b_j \chi _j(x + \rho (p _n )) .
$$
where $(p_n)_{n\geq 1}$ denotes the strictly increasing sequence of all primes.  If $x$ is not in $\Lambda _k$ then nor is $x+ \rho (p_n)$ because
$\rho (p_n) \in \Lambda _k$.  This means that we have
\begin{equation}
\label{eq4_4}
A_Nf(x) = A_Nf(x) I_{\Lambda _k}(x). 
\end{equation}
For $x \in \Lambda _k$ we have 
$$
A_Nf(x) ={1\over N} \sum _{n=1}^N  \sum _{j=1}^{\nu} b_j \chi _j(x + \rho (p _n )) .
$$
$$
 = \sum _{j=1}^{\nu} \chi (x)  {1\over N} \sum _{n=1}^N \chi _j(\rho (p _n )).
$$
By Lemma \ref{lemma5} and 
\eqref{eq4_4} 
for all $x \in \Bbb{Q}_a$ we get
\begin{equation}
\label{eq4_5}
\ell (f) (x)= \lim _{N\to \infty}I_{\Lambda _k}(x)A_Nf(x)=I_{\Lambda _k}(x) \sum _{j=1}^{\nu} b_jG(\Psi (\chi _j)) \chi _j (x)
\end{equation}
$$
=I_{\Lambda _k}(x) \sum _{j=1}^{\nu} b_jG(\Psi m(\chi _j )) \chi _j (x).
$$
Computations similar to 
\eqref{eq4_2} and \eqref{eq4_5},
show for all $\chi \in \Bbb{Q}_{a^*}$
\begin{equation}
\label{eq4_6}
\Bbb{F}(\ell f)(\chi ) = \lambda ( \Lambda _k) \sum _{j=1}^{\nu} b_jG(\Psi (\chi _j))I_{\lambda _j+ A(\Bbb{Z}_{a^*} , \Lambda _k)}(\chi ).
\end{equation}
If $\chi \in \chi _j + A(\Bbb{Q}_{a^*} , \Lambda _k)$ then $\chi = \chi _j + \chi '$ with $\chi '\in A(\Bbb{Q}_{a^*} , \Lambda _k)$.  Consequently $\Psi (\chi ) = \Psi (\chi _j ) + \Psi ( \chi ')$.  Recall $\Psi : \Bbb{Q}_{a^*} \to  {\Bbb{Q}_{a^*} \backslash A(\Bbb{Q}_{a^*} , \Lambda _k)}$ and $\chi '$ is in $A(\Bbb{Q}_{a^*} , \Lambda _k)$, which is a subset of $A(\Bbb{Q}_{a^*} , \Bbb{Z}_a)$ because $\Bbb{Z}_a$ is a subset of $\Lambda _k$.  Hence if $\chi \in \chi _j + A(\Bbb{Q}_{a^*} , \Bbb{Z}_a)$,
then $\Psi (\chi ) = \Psi (\chi _j )$.  
Using \eqref{eq4_2} 
and 
\eqref{eq4_4} 
we have that for all $x \in \Bbb{Q}_{a^*}$
$$
\Bbb{F} (\ell (f)) (x)(\chi ) =m(\chi ) \Bbb{F}(f)(x)(\chi ) ,
$$
establishing the theorem for all functions 
of the form \eqref{eq4_1}.
To prove it in general, given 
$f \in L^p \cap L^2( \Bbb{Q}_a )$,  
let $(f_n)_{n\geq 1}$ be a sequence of 
functions of the form \eqref{eq4_1}
which 
converge to $f \in L^2$.  
Because $\ell$ is continuous  
$( \ell (f_n))_{n\geq 1}$ converges 
to $\ell (f)$ in $L^2(\Bbb{Q} _a)$.  
Hence a  subsequence of 
$(\Bbb{F} ( \ell (f_n)) _{n\geq 1}$  converges  
almost everywhere in $\Bbb{Q}_{a^*}$ to 
$\Bbb{F}( \ell (f))$. The desired conclusion 
follows from the fact that 
$\Bbb{F} ( \ell (f_n)) = m( \chi ) \Bbb{F} (f_n)$ 
and a subsequence of $(\Bbb{F} (f_{n}))_{n\geq 1}$ 
converges to $\Bbb{F} (f)$ almost everywhere.  

\bigskip
To prove Theorem \ref{thm3}, 
we need only observe that $f \in L^p(\Bbb{Z }_{a})$ then it may be approximated by functions in
$f \in L^p(\Bbb{Z }_{a})$ of the form 
\eqref{eq4_1} 
with $k=0$ as required.

\bigskip \bigskip
\section{More on the Limit Function 
${\bf \ell (f)}$}
\label{S5}  
Define $H: \hat {\Bbb{Z}}_a \to \Bbb{C}$ , dependent on $\alpha _1, \ldots \alpha _k$  by 
$$ 
H  \left ({l \over a_0 \ldots a_k} \right ) : ={1\over D_k} 
\sum _{m=1}^{D_k}e\left ( {\gamma (m) \over D_k} \right ).
$$
Also define $n:\Bbb{Q}_{a^*} \to \Bbb{C}$ by $n(\chi ) : =H( \Psi (\chi ))$.  
In \cite{asmarnair} 
we show the following lemma and the two theorems
\medskip
\medskip

\begin{lemma}
\label{lemma6}
For each $\chi =\chi _{l\over a_o \ldots a_r} \in \Bbb{Z}_{a^*}$  we have
$$
H(\chi ) = \lim _{q\to \infty}{1\over N} \sum _{1\leq q  \leq N} \chi (\rho ( q)).
$$  Further for $\chi \in \Bbb{Q}_{a^*}$ we have
$$
n(\chi ) = \lim _{N\to \infty} {1\over N} \sum _{1\leq q  \leq N} \chi ( \rho (q) ).
$$
\end{lemma}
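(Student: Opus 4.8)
The plan is to mimic the proof of Lemma \ref{lemma5} almost verbatim, the only difference being that the one non-elementary ingredient used there --- the Siegel--Walfisz theorem for primes in arithmetic progressions --- is now replaced by the elementary count of integers lying in a fixed residue class, which is all that is needed once the average runs over all of $[1,N]$ rather than over the primes. First I would reproduce the opening chain of identities from the proof of Lemma \ref{lemma5}; none of those steps uses primality, so for $\chi = \chi_{l/(a_0\cdots a_r)} \in \mathbb{Z}_{a^*}$ and every positive integer $n$ one obtains
$$
\chi(\rho(n)) = e\!\left(\frac{\gamma(n)}{D_r}\right),
$$
where $D_r = \operatorname{lcm}(B_1,\ldots,B_k)$ and $\gamma(x) = D_r\bigl(\tfrac{m_k}{B_k}x^k+\cdots+\tfrac{m_1}{B_1}x\bigr)$. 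Since $B_j \mid D_r$ for each $j$, the polynomial $\gamma$ has rational integer coefficients, so the function $n\mapsto e(\gamma(n)/D_r)$ is periodic modulo $D_r$.

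Next I would split $\sum_{1\leq n\leq N}$ into the $D_r$ residue classes modulo $D_r$. Using the bound $\#\{1\leq n\leq N : n\equiv m \pmod{D_r}\} = N/D_r + O(1)$, uniform in $m$, this gives
$$
\frac{1}{N}\sum_{1\leq n\leq N}\chi(\rho(n)) = \frac{1}{D_r}\sum_{m=1}^{D_r} e\!\left(\frac{\gamma(m)}{D_r}\right) + O\!\left(\frac{D_r}{N}\right),
$$
and letting $N\to\infty$ the right-hand side converges to $H(\chi)$ by the very definition of $H$; this is the first assertion. The second assertion, for $\chi\in\mathbb{Q}_{a^*}$, follows exactly as in the closing sentence of the proof of Lemma \ref{lemma5}: since $\rho(q)\in\mathbb{Z}_a$ and $\chi(x)=\Psi(\chi)(x)$ for all $x\in\mathbb{Z}_a$, the averages attached to $\chi$ coincide with those attached to the character $\Psi(\chi)\in\hat{\mathbb{Z}}_a$, whose limit is $H(\Psi(\chi))=n(\chi)$.

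I do not anticipate any genuine obstacle: the argument is strictly easier than that of Lemma \ref{lemma5}, since the $\phi(D_r)$ normalization and the prime-power error terms that occur there both disappear, and no equidistribution input beyond counting a complete residue system modulo $D_r$ is required. The only point deserving a moment's care is the verification that $\gamma$ has integer coefficients, so that the exponential sum is genuinely $D_r$-periodic; this is immediate from $D_r=\operatorname{lcm}(B_1,\ldots,B_k)$.
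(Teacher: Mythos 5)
Your proof is correct. Note that the paper does not actually prove Lemma \ref{lemma6} here: it is quoted from the earlier paper of Asmar and Nair \cite{asmarnair}, so there is no in-paper argument to compare against; but your proposal is exactly the natural ``de-primed'' version of the paper's proof of Lemma \ref{lemma5}, and every step checks out. The opening identity $\chi(\rho(n)) = e\left(\gamma(n)/D_r\right)$ indeed uses nothing about primality (you inherit the paper's own harmless convention of suppressing the constant unimodular factor coming from $\alpha_0$, just as the proof of Lemma \ref{lemma5} does), the integrality of the coefficients of $\gamma$ follows from $B_j \mid D_r$ as you say, and once the summand is periodic modulo $D_r$ the elementary count $\#\{n\leq N : n\equiv m \ (\mathrm{mod}\ D_r)\} = N/D_r + O(1)$ replaces the Siegel--Walfisz input and yields $H(\chi)$ in the limit; the reduction of the $\mathbb{Q}_{a^*}$ case to $\hat{\mathbb{Z}}_a$ via $\chi(x)=\Psi(\chi)(x)$ is the same device the paper uses at the end of Lemma \ref{lemma5}. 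In short, your argument is a correct, self-contained proof that is strictly simpler than the prime case, which is presumably why the paper relegates it to a citation.
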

\medskip
We also showed the following theorems.
\bigskip
\begin{theorem}
\label{thm7}
Suppose that $p \in (1, 2]$ and that $f \in L^p \cap L^2 ( \Bbb{Q}_a) $.   
Also suppose that 
$$
\rho (n) = \alpha _k n^k + \alpha _{k-1}n^{k-1} + \ldots \ + \alpha _1 n + \alpha _0,
$$
has degree at least $2$ with one of the numbers $\{ \alpha_1, \ldots , \alpha _k  \} \subset \Bbb{Q}_a $ a generator of $\Bbb{Q}_a$.  Then
$$
\lim _{N\to \infty} {1\over N} \sum _{n=1}^Nf(x + \rho (n) ) = k (f)(x),
$$
for $ k(f) \in L^p$ almost everywhere with respect to Haar measure on $\Bbb{Q} _a$, where $$\Bbb{F}( k(f)) (\chi ) = n(\chi) \Bbb{F}(f)(\chi ). $$
\end{theorem}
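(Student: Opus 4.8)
The plan is to repeat, almost word for word, the proof of Theorem~\ref{thm2} given in Section~\ref{S4}, with two substitutions: throughout, the averages over primes $\frac1N\sum_{1\le p\le N}f(x+\rho(p))$ are replaced by the averages over all positive integers $\frac1N\sum_{n=1}^{N}f(x+\rho(n))$, and the role played there by Lemma~\ref{lemma5} is now played by Lemma~\ref{lemma6}; correspondingly the limit operator denoted $\ell$ in Section~\ref{S4} is the operator $k$ of the present statement and the multiplier denoted $m$ there is replaced by $n$. As a first reduction one fixes $f\in L^p\cap L^2(\mathbb{Q}_a)$ whose support lies in $\Lambda_k$ for some non-positive integer $k$ chosen negative enough that $\Lambda_k$ contains each of $\alpha_0,\dots,\alpha_k$; since compactly supported functions are dense in $L^p(\mathbb{Q}_a)$ this costs no generality, and by the Stone--Weierstrass theorem such an $f$ is approximable, in $L^p$ and in $L^2$, by functions of the form $I_{\Lambda_k}\sum_{j=1}^{\nu}b_j\chi_j$ with $b_j\in\mathbb{C}$ and $\chi_j\in\mathbb{Q}_{a^*}$. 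So it suffices to prove the conclusion first for such trigonometric polynomials and then pass to the limit.

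The almost-everywhere existence of $k(f)(x):=\lim_{N\to\infty}\frac1N\sum_{n=1}^{N}f(x+\rho(n))$ is the ergodic input. Taking $X=\mathbb{Q}_a$ with Haar measure $\lambda$ and $T_i\colon x\mapsto x+\alpha_i$ for $i=1,\dots,k$, it follows from the pointwise ergodic theorem for integer polynomial averages of commuting measure-preserving transformations (the analogue, for averaging over all integers, of the prime result underlying \eqref{eq1_1}) together with the corresponding Sawyer-type maximal inequality $\bigl\|\sup_{N\ge1}|A_Nf|\bigr\|_p\le C\|f\|_p$, which is the exact analogue of Lemma~\ref{lemma4}. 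Combining this maximal bound with the Lebesgue dominated convergence theorem shows that $A_Nf\to k(f)$ also in $L^p$. Since each $A_N$ commutes with translations on $\mathbb{Q}_a$, so does $k$, and $k$ is bounded on $L^p$; hence $k$ is a Fourier multiplier on $L^p(\mathbb{Q}_a)$, and it remains only to check that the associated multiplier coincides with $n$.

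To identify the multiplier I would run the computation \eqref{eq4_2}--\eqref{eq4_6} verbatim. For $f=I_{\Lambda_k}\sum_{j}b_j\chi_j$ and $x\notin\Lambda_k$ one has $x+\rho(n)\notin\Lambda_k$, because $\rho(n)\in\Lambda_k$ (here $\Lambda_k$ is a subgroup of $\mathbb{Q}_a$ stable under multiplication by $\mathbb{Z}_a$ and $\alpha_0,\dots,\alpha_k\in\Lambda_k$), so that $A_Nf=I_{\Lambda_k}A_Nf$; for $x\in\Lambda_k$ one has $A_Nf(x)=\sum_{j=1}^{\nu}b_j\chi_j(x)\cdot\frac1N\sum_{n=1}^{N}\chi_j(\rho(n))$, and by Lemma~\ref{lemma6} the inner average tends to $H(\Psi(\chi_j))=n(\chi_j)$. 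Taking Fourier transforms, using \eqref{eq4_3} and the fact that $\Psi$ is constant on each coset $\chi_j+A(\mathbb{Q}_{a^*},\Lambda_k)$ (valid since $A(\mathbb{Q}_{a^*},\Lambda_k)\subseteq A(\mathbb{Q}_{a^*},\mathbb{Z}_a)$), yields $\mathbb{F}(k(f))(\chi)=n(\chi)\mathbb{F}(f)(\chi)$ for these $f$. Finally, for general $f\in L^p\cap L^2(\mathbb{Q}_a)$ one chooses functions $f_r$ of the above form with $f_r\to f$ in $L^2$; continuity of $k$ gives $k(f_r)\to k(f)$ in $L^2$, and passing to a subsequence along which $\mathbb{F}(f_r)\to\mathbb{F}(f)$ and $\mathbb{F}(k(f_r))\to\mathbb{F}(k(f))$ almost everywhere gives the identity $\mathbb{F}(k(f))(\chi)=n(\chi)\mathbb{F}(f)(\chi)$ in general.

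The main obstacle is the first step, namely the almost-everywhere convergence (and the accompanying maximal inequality) for the integer polynomial averages $\frac1N\sum_{n=1}^{N}f(x+\rho(n))$ on $\mathbb{Q}_a$; this is where the hypothesis $p>1$ is essential, being precisely what makes the Sawyer-type maximal estimate available, while the hypothesis $\deg\rho\ge2$ is inherited from Lemma~\ref{lemma6} and is what forces the multiplier into the Gauss-sum form stated. The convergence itself rests on Bourgain's polynomial pointwise ergodic theorem, and its extension to the commuting transformations $T_1^{\,n}T_2^{\,n^2}\cdots T_k^{\,n^k}$, rather than on Birkhoff's theorem, which would suffice only when $\deg\rho=1$. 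Once that input is granted, everything else is the routine Fourier-multiplier bookkeeping of Section~\ref{S4}, with no new difficulty.
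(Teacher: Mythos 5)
Your proposal is correct and follows essentially the same route as the paper: Theorem~\ref{thm7} is there obtained (by citation to \cite{asmarnair}) through exactly the multiplier argument of Section~\ref{S4}, with the prime averages and Lemma~\ref{lemma5} replaced by the integer polynomial averages (Bourgain-type pointwise convergence together with the Sawyer-type maximal inequality of Lemma~\ref{lemma4}) and by Lemma~\ref{lemma6}, which is precisely your substitution. Your handling of the support reduction (choosing $\Lambda_k$ to contain the coefficients $\alpha_0,\dots,\alpha_k$ so that $\rho(n)\in\Lambda_k$) is, if anything, slightly more careful than the paper's own wording.
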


\bigskip
\begin{theorem}
\label{thm8}
Suppose that $p \in (1, 2]$ and that 
$f \in L^p ( \Bbb{Z}_a )$.  
Also suppose
$$
\rho (n) = \alpha _k n^k + \alpha _{k-1}n^{k-1} + \ldots \ + \alpha _1 n + \alpha _0,
$$
has degree 
at least $2$  with one of the numbers $\{ \alpha_1, \ldots , \alpha_k \} \subset \Bbb{Z}_a $ a generator of $\Bbb{Z}_a$.   Then
$$
\lim _{N\to \infty} {1\over N} \sum _{n=1}^Nf(x + \rho (n) ) = k (f)(x),
$$
for $k (f) \in L^p(\Bbb{Z}_a)$ almost everywhere with respect to Haar measure on $\Bbb{Z} _a$, where $$\Bbb{F}( k (f)) ( \chi ) = n(\chi ) \Bbb(f)( \chi ).$$
\end{theorem}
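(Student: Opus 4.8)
The plan is to follow the scheme of Section~\ref{S4} (the proof of Theorems~\ref{thm2} and~\ref{thm3}), with the averages over the primes replaced throughout by the averages $B_Nf(x):=\tfrac1N\sum_{n=1}^{N}f(x+\rho(n))$ over all positive integers and with Lemma~\ref{lemma6} taking over the role Lemma~\ref{lemma5} plays there; in the notation of \eqref{eq1_1} this corresponds to taking all the $T_i\colon x\mapsto x+\alpha_i$ to be the (commuting, Haar-measure-preserving) translations of $\mathbb{Z}_a$, so that $x+\rho(n)=T_1^{\,n}T_2^{\,n^2}\cdots T_k^{\,n^k}(x+\alpha_0)$. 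Two facts are needed: (i) $B_Nf$ converges almost everywhere for every $f\in L^p(\mathbb{Z}_a)$ with $p>1$; and (ii) the maximal inequality $\big\|\sup_{N\ge1}|B_Nf|\big\|_p\le C\|f\|_p$. One route to both: fact (i) is the integer analogue of the input \cite{trojan} used for the prime averages, namely Bourgain's pointwise ergodic theorem for polynomial orbits together with its extension to commuting transformations, and then (ii) follows from S.~Sawyer's theorem exactly as in Lemma~\ref{lemma4}. Alternatively --- since here the $T_i$ are translations --- one proves (ii) directly by transferring Bourgain's polynomial maximal estimate on $\mathbb{Z}$ (whose constant depends only on the degree) down the tower of finite cyclic quotients $\mathbb{Z}/(a_0\cdots a_r)\mathbb{Z}$ of $\mathbb{Z}_a$, and then deduces (i) from (ii) and Lemma~\ref{lemma6} by the Banach principle. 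I expect this harmonic-analytic input to be the one genuine obstacle, the rest being soft.

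Given (i) and (ii), the limit $k(f):=\lim_{N\to\infty}B_Nf$ exists almost everywhere for every $f\in L^p(\mathbb{Z}_a)$; by (ii) and the dominated convergence theorem $B_Nf\to k(f)$ also in $L^p$, and the bound in (ii) makes $k$ a bounded linear operator on $L^p(\mathbb{Z}_a)$. Each $B_N$ commutes with translations, hence so does $k$, so $k$ is a Fourier multiplier on $L^p(\mathbb{Z}_a)$; since $\mathbb{Z}_a$ is compact, $\hat{\mathbb{Z}}_a$ is discrete and the Fourier transform is defined on all of $L^1(\mathbb{Z}_a)\supseteq L^p(\mathbb{Z}_a)$, so --- in contrast to Theorem~\ref{thm7} --- no auxiliary $L^2$ hypothesis is required here. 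To identify the symbol of $k$ one tests it on a dense family. By the Stone--Weierstrass theorem the trigonometric polynomials $f=\sum_{j=1}^{\nu}b_j\chi_j$ on $\mathbb{Z}_a$, with $b_j\in\mathbb{C}$ and $\chi_j\in\mathbb{Q}_{a^*}$ --- that is, the functions \eqref{eq4_1} with $k=0$ --- are dense in $L^p(\mathbb{Z}_a)$. For such an $f$, using multiplicativity of the $\chi_j$ and the inclusion $\rho(n)\in\mathbb{Z}_a$, we get
$$
B_Nf(x)=\sum_{j=1}^{\nu}b_j\,\chi_j(x)\;\frac1N\sum_{n=1}^{N}\chi_j(\rho(n)),
$$
so Lemma~\ref{lemma6} gives $k(f)(x)=\sum_{j=1}^{\nu}b_j\,n(\chi_j)\,\chi_j(x)$, with $n(\chi_j)=H(\Psi(\chi_j))$.

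Taking Fourier transforms of this last identity --- the computation is the $\mathbb{Z}_a$-analogue of \eqref{eq4_2}, \eqref{eq4_5} and \eqref{eq4_6}, with the orthogonality relations of the characters of the compact group $\mathbb{Z}_a$ now playing the role the annihilator cosets play there --- shows that $\mathbb{F}(k(f))(\chi)=n(\chi)\,\mathbb{F}(f)(\chi)$ for every trigonometric polynomial $f$; hence the symbol of $k$ is $n$. Finally, for an arbitrary $f\in L^p(\mathbb{Z}_a)$, pick trigonometric polynomials $f_m\to f$ in $L^p$; then $k(f_m)\to k(f)$ in $L^p$, and since $\|\mathbb{F}g\|_\infty\le\|g\|_1$ and $L^p(\mathbb{Z}_a)\subseteq L^1(\mathbb{Z}_a)$ we have $\mathbb{F}(k(f_m))\to\mathbb{F}(k(f))$ and $\mathbb{F}(f_m)\to\mathbb{F}(f)$ uniformly on $\hat{\mathbb{Z}}_a$; since $n$ is bounded, the identity $\mathbb{F}(k(f))(\chi)=n(\chi)\,\mathbb{F}(f)(\chi)$ passes to the limit. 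This is the asserted relation, and the proof is complete.
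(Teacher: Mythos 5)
Your proposal is correct and is essentially the paper's own approach: Theorem~\ref{thm8} is not proved in this paper but quoted from \cite{asmarnair}, and the argument there (like the proof of Theorems~\ref{thm2} and \ref{thm3} in Section~\ref{S4}, with Lemma~\ref{lemma6} in place of Lemma~\ref{lemma5} and Bourgain's polynomial maximal inequality in place of Trojan's theorem for primes) runs exactly along the scheme you describe, including the identification of the multiplier $n(\chi)=H(\Psi(\chi))$ on trigonometric polynomials and the passage to general $f\in L^p(\mathbb{Z}_a)$. The only caution concerns your first route to the maximal/pointwise input --- a pointwise ergodic theorem for $T_1^{n}T_2^{n^2}\cdots T_k^{n^k}$ with general commuting transformations on $L^p$, $p>1$, is not something you can simply cite --- but your alternative route (transference of Bourgain's estimate, whose constant depends only on the degree, down the finite quotients $\mathbb{Z}/(a_0\cdots a_r)\mathbb{Z}$, then Lemma~\ref{lemma6} and the Banach principle) is exactly what works here, since the transformations are translations, so the proposal stands.
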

\bigskip
The Riesz representation theorem tells us that there exist measures $\mu _{\ell}$ and $\mu _{k}$ on $\Bbb{Z}_a$ such that at least for $f\in C(\Bbb{Z}_a)$ we have $\ell (f) = \int fd\mu _{\ell} $ and $k (f) = \int fd\mu _{k} $.  The same is true for with $\Bbb{Z}_a$ replaced by $\Bbb{Q}_a$.   A natural question is whether the measures $\mu _{\ell}$ or $\mu _{k}$ are in fact Haar measure.  One might conjecture that if $\rho$ has degree at least two the answer is no.  A special case we can deal with is $\rho (n)=n^2$.
To see this, choose $l\not=0$ such that  $t={l/a_0\ldots a_s}$,  is of the form
$a\over q$ for a prime $q$, with $a\not= 0$.  In this case $H(\chi _{t})$ is of the form
$C \sum _{r=1}^q
e^{2\pi i ar^2\over q}$ for a non-zero 
constant $C$.   
As is well known 
$\left| \sum _{r=1}^q
e^{2\pi i a r^2 \over q} \right|
=
q^{1 \over 2}$ (\cite{apostol}, 
p. 166).  This means 
that the Fourier coefficient of 
the measure $\mu _{\ell}$  of the squares
on  $\Bbb{Z}_a$ is non-zero.  
For $\mu _{\ell}$ to be Haar measure you would 
need $H({t})=0$ for every non-zero $t$ which we 
have discounted.

Unfortunately, dealing with polynomials more 
general than $\rho (n)=n^2$ or $s>1$, is a good 
deal more complex and is as yet unresolved.  This 
is because it relies on lower bounds for 
exponential sums of the form $\left| 
\sum _{r=1}^q e^{2 \pi i a \rho(r) \over q}
\right|$, which seems a serious 
undertaking and yet to appear in the literature.  We can however show that $\mu _{\ell}$ and 
$\mu _{k}$ are always continuous which we now 
prove.

\bigskip
For a measure $\mu$ defined on the group 
$\Bbb{Z}_a$ set 
$$
\Bbb{F}(\mu ) (\chi_t ) =\int_{\Bbb{Z}_a} \overline{ \chi }_t d\mu , \qquad \quad 
( t \in \hat{\Bbb{Z}}_a)
$$
i.e. the Fourier transform of $\mu$. The analogue of Wiener's lemma on the group $\Bbb{Z}_a$ 
(\cite{grahammcgehee}, 
p. 236) requires us to show that
$$
\lim _{r\to \infty} {1\over A_r} \sum _{ \{ { l \over A_s} : 0\leq s \leq r \}} |\Bbb{F}(\mu ) (\chi _t )|^2 =0.
$$
\medskip
We need the following Lemma \cite{nair5}.

\begin{lemma}
\label{lemma9}
Suppose that
$$
\psi (x) \ = \ a_dx^d \ + \ \cdots \ + \ a_1x
$$
for integers $a_i \ (i\ = \ 1,2, \cdots ,d)$ and let

$$
S({\psi  | q}) \ = \ \sum _{r=0}^{q-1}e^{2\pi i\psi (r)q^{-1}}.
$$
Then there exist $\delta _0 \ > \ 0$, and 
$C_{ \delta _0} \ > \ 0$ such that
$$
|S(\psi  |q )| \ \leq \ {C_{\delta _0}\over q^{ \delta _0 }}.
$$ 
\end{lemma}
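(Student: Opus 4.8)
The estimate is Hua's inequality for complete exponential sums. As written it needs the hypothesis $\gcd(a_1,\dots,a_d,q)=1$ --- without it $S(\psi\,|\,q)$ can equal $q$, e.g.\ for $\psi(x)=qx$ --- and under that hypothesis one in fact gets the sharp exponent $\delta_0=1/d$, i.e.\ $|S(\psi\,|\,q)|\le C_{\delta_0}\,q^{1-1/d}$; this is also the form needed in the application, since $S(\psi\,|\,q)$ is a sum of $q$ unimodular terms and so the bound can only be intended with $q^{1-\delta_0}$ rather than $q^{-\delta_0}$. The plan is to argue in three steps.

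First I would reduce to prime-power modulus. If $q=q_1q_2$ with $(q_1,q_2)=1$, choose $\bar q_1,\bar q_2$ with $\bar q_1q_1\equiv1\pmod{q_2}$, $\bar q_2q_2\equiv1\pmod{q_1}$; then $\psi(r)/q\equiv\bar q_2\psi(r)/q_1+\bar q_1\psi(r)/q_2\pmod1$, and letting $r$ run through $\mathbb{Z}/q\mathbb{Z}$ via the Chinese Remainder Theorem factors the sum as $S(\psi\,|\,q)=S(\psi_1\,|\,q_1)\,S(\psi_2\,|\,q_2)$, where $\psi_1,\psi_2$ have the same degree $d$ and inherit the coefficient coprimality at every prime dividing $q_1$ (resp.\ $q_2$). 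Hence it suffices to bound $|S(\psi\,|\,p^\nu)|$ for each prime power $p^\nu\,\|\,q$.

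Next I would treat prime powers. For $\nu=1$: some $a_i$ with $1\le i\le d$ is a unit mod $p$, so $\psi\bmod p$ is a non-constant polynomial of degree $\le d$, and for $p>d$ Weil's bound gives $|S(\psi\,|\,p)|\le(d-1)p^{1/2}\le(d-1)p^{1-1/d}$ (using $d\ge2$), the finitely many primes $p\le d$ being absorbed into the final constant by the trivial bound. For $\nu\ge2$ I would run a $p$-adic stationary-phase (Hensel-lifting) argument: writing $r=u+p^{\lceil\nu/2\rceil}v$ and Taylor expanding, $\psi(r)\equiv\psi(u)+p^{\lceil\nu/2\rceil}v\,\psi'(u)\pmod{p^\nu}$, so the sum over $v$ vanishes unless $p^{\lfloor\nu/2\rfloor}\mid\psi'(u)$; the number of such residues $u$ is controlled by $\deg\psi'=d-1$, and iterating the resulting recursion in $\nu$ gives $|S(\psi\,|\,p^\nu)|\le C_d\,p^{\nu(1-1/d)}$ with $C_d$ depending only on $d$. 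A cruder but fully self-contained alternative avoiding Weil is $(d-1)$-fold Weyl differencing of the complete sum, reducing everything to linear complete sums mod $p^\nu$ (each $0$ or $p^\nu$) and giving the weaker but still sufficient exponent $\delta_0=2^{-(d-1)}$.

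Finally I would recombine: multiplying the prime-power bounds over $p\,\|\,q$ yields $|S(\psi\,|\,q)|\le C_d^{\,\omega(q)}\,q^{1-1/d}$, where $\omega(q)$ counts the distinct prime factors of $q$; since $\omega(q)=O(\log q/\log\log q)$ one has $C_d^{\,\omega(q)}\ll_{d,\epsilon}q^{\epsilon}$ for every $\epsilon>0$, so taking $\epsilon=\delta_0=\tfrac{1}{2d}$ and enlarging the constant to cover the bounded range of small $q$ gives $|S(\psi\,|\,q)|\le C_{\delta_0}\,q^{1-\delta_0}$. The main obstacle is the prime-power estimate in the second step: obtaining a bound whose constant depends on $d$ alone, uniformly in the coefficients and valid even for small $p$ and large $\nu$. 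That is exactly the content of Hua's inequality, and if a self-contained proof were not wanted one could simply cite it (Hua; or Vaughan's monograph on the Hardy--Littlewood method; or Iwaniec--Kowalski). The Chinese Remainder factorization, Weil's bound, and the absorption of the $\omega(q)$-loss are all routine.
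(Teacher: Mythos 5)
You should know that the paper does not actually prove this lemma: it is imported verbatim by citation to \cite{nair5} (and ultimately rests on Hua's bound for complete exponential sums, as treated e.g. in \cite{vaughan}), so there is no internal argument to compare yours against. Your sketch is precisely the standard proof of that result --- Chinese Remainder multiplicativity, Weil's bound (or a trivial bound) at primes, the $p$-adic stationary-phase/Hensel recursion at higher prime powers, and absorption of the $C_d^{\omega(q)}$ loss --- and it is sound in outline; the one step you rightly flag as the real content, namely a prime-power bound uniform in the coefficients and valid for small $p$ and large $\nu$ (where multiple roots and the content of $\psi'$ cause the headaches), is exactly Hua's inequality, which may legitimately be cited rather than reproved. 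Your two corrections to the statement are also well taken: as printed the bound $|S(\psi\,|\,q)|\le C_{\delta_0}q^{-\delta_0}$ is impossible (a Gauss sum already has modulus $q^{1/2}$), and what the paper uses in Section 5 is the normalized form $\frac{1}{q}|S(\psi\,|\,q)|\le C_{\delta_0}q^{-\delta_0}$, i.e. $|S(\psi\,|\,q)|\le C_{\delta_0}q^{1-\delta_0}$; moreover uniformity in the coefficients, which the application requires since $\gamma$ varies with $r$, forces the hypothesis $\gcd(a_1,\ldots,a_d,q)=1$, and this hypothesis does hold in the application because $D_r$ is the least common multiple of the reduced denominators $B_j$, so for every prime $p\mid D_r$ some coefficient $(D_r/B_j)m_j$ of $\gamma$ is prime to $p$. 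In short: your route is not different from the paper's so much as more explicit than it; the only caution is that a fully self-contained write-up of the prime-power step would require the careful root-counting argument you allude to, not just the displayed recursion.
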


This means that there exist $\delta _H >0$ such that  $H({l\over A_r}) \ll A_r^{- \delta _H}$.  Also
$$
\sum _{1\leq m \leq q \atop (m,q)=1 } e( \rho (m)) =\sum _{1\leq m \leq q } e( \rho (m)) - \sum _{p\over q} \sum _{1\leq m \leq q \atop p|m } e( \rho (m)),
$$
so there exist $\delta _{\ell} >0$ such that $G({l \over A_r}) \ll A_r^{-\delta _{\ell}}$.  In either case there is $\delta > 0$ such that
$$
{1\over A_r} \sum _{ \{ t= {l \over A_s} : 0\leq s \leq r \}} |\Bbb{F}(\mu ) (\chi _t )|^2 \ll A_r^{-\delta},
$$
as required, where $\mu$ is either $\mu _{\ell}$ or $\mu_k$.  Hence both measures $\mu _k$ and 
$\mu _{\ell}$ are continuous on 
$\Bbb{Z}_a$. 

\bigskip

If we consider $\Bbb{Q}_a$, we notice that 
$m(\chi ) = G( \Psi (\chi ) )$ and 
$n(\chi ) = H(\Psi ( \chi )) $.  
The argument for $\Bbb{Z}_a$  with 
$G(\chi )$ replaced by $G( \Psi (\chi ) )$  
and with $H(\chi )$ replaced by 
$H( \Psi (\chi ))$ now implies that both 
$\mu _{\ell}$ and $\mu_k$ are continuous.  
\medskip
\medskip

{\bf Acknowledgement :} 
The authors thank Buket Eren G\"okmen 
for comments that much improved the 
readability of the manuscript.  
Radhakrishnan Nair thanks 
Laboratoire de Math\'ematique de
l'Universit\'e Savoie Mont Blanc 
for its hospitality and financial 
support while this paper was being written.

\end{document}